\def\R{{\Bbb R}}
\def\N{{\Bbb N}}
\def\R{{\Bbb R}}
\def\C{{\Bbb C}}
\def\P{{\Bbb P}}
\numberwithin{equation}{section}
\newcommand {\Cal}{\mathcal}
\newtheorem{lemma}{Lemma}[section]
\newtheorem{corollary}[lemma]{Corollary}
\newtheorem{definition}[lemma]{Definition}
\newtheorem{remark}[lemma]{Remark}
\def\leq{\leqslant}
\begin{document}
\title{{Holomorphic curves into algebraic varieties intersecting moving hypersurface targets
}}
\author{Gerd Dethloff and Tran Van Tan}
\date{$\quad$}
\maketitle
\begin{abstract}
In [Ann. of Math.169 (2009)], Min Ru proved a second main theorem  for
algebraically nondegenerate holomorphic curves in complex projective varieties
 intersecting fixed hypersurface targets.
In this paper,  by introducing a new proof method for the case of projective varieties, we generalize this result  to moving hypersurface targets.
\\


\noindent2010 {\it Mathematics Subject Classification.} 32H30, 11J87.\\
{\it Key words.}  Nevanlinna theory,  Vojta's dictionary, Diophantine approximation.
\end{abstract}
\section{Introduction}
During the last century, several Second Main Theorems  have been established for linearly nondegenerate holomorphic curves in complex
projective spaces intersecting (fixed or moving) hyperplanes, and we now have a satisfactory  knowledge about it.
Motivated by a paper of Corvaja-Zannier \cite{CZ}
 in Diophantine approximation, in 2004 Ru \cite{R2} proved a Second Main Theorem
for algebraically nondegenerate holomorphic curves in the complex projective space $\C \P^n$ intersecting (fixed) hypersurface targets, which settled
a long-standing conjecture of Shiffman \cite{S}. In 2011, Dethloff-Tan \cite{DT} generalized this result of Ru to moving hypersurface targets
(this means where the coefficients of the hypersurfaces are meromorphic functions) in $\C \P^n.$ The counterpart of the Second Main Theorem of Dethloff-Tan in Diophantine approximation was independently given by  Chen-Ru-Yan \cite{CRY3} and Le \cite{G5}  in 2015.

In order to reduce the case of hypersurfaces  to the case of hyperplanes by an approximation, the above authors  construct a filtered vector space.
 The key point in their proof is the  following property: If homogeneous polynomials $Q_0,\dots, Q_n$ in $\C[x_0,\dots,x_n]$ have no  non-trivial common solutions, then $\{Q_0,\dots,Q_n\}$ is a regular sequence.
 Thanks to this property, they can construct  linear isomorphisms  showing that the dimension of all factor vector spaces in the filtration is exactly equal to the corresponding value of the Hilbert polynomial of a common algebraic variety, and then can be calculated. However, this regular sequence property is not true any more for the general case of varieties $V \subset \C \P^M$,
 and is related to whether or not the homogenous coordinate ring of $V$ is Cohen-Macauley, moreover,  there are examples to show that their linear isomorphisms can not be extended to  the general case of varieties. So by dropping this restriction on the variety $V$ and thereby losing regular sequences, we can no longer exactly calculate the dimensions of the various factor  vector spaces of the filtration by using this method.

 The result of Corvaja-Zennier \cite{CZ} was actually reproved and generalized to the case of  projective varieties by Evertse-Ferretti \cite{EF2} using somewhat different ideas. Their approach also admitted extensions
in Nevanlinna theory which were developped again by Ru \cite{R3} in 2009,
obtaining  a Second Main Theorem for entire curves in arbitrary projective varieties $V,$ with respect to
hypersurfaces of the ambient space. To prove the Second Main Theorem, in \cite{R3}, Ru uses the finite morphism $\phi : V \to \C \P^{q-1},$
$\phi(x) := [Q_1(x) :\cdots : Q_q(x)],$  where the $Q_j$'s are homogeneous polynomials (with common degree) defining
the given hypersurfaces. Thanks to this finite morphism, he can use a generalization of  Mumford's identity (the version with explicit estimates obtained by
 Evertse and Ferretti \cite{EF, EF2})
 for the variety Im$\phi \subset\C \P^{q-1}.$ However, for the case of moving hypersurfaces,
 we do not have such a morphism.

 The purpose of this article is twofold, giving a  Second Main Theorem for entire curves in projective varieties intersecting moving hypersurfaces and introducing  a new approach for the case of projective varieties.
 Firstly,  we show that by specializing the coefficients of the polynomials corresponding to the moving hypersufaces in generic points,  the dimensions of the given vector spaces do not change.
Secondly, we  construct a  filtration of the vector space corresponding to the coordinate ring of the variety. After that, by observing the Hilbert sequence asymptotics,
we calculate the sum of the dimensions of all the factors of the vector spaces in the filtration and by using the algebraic properties of our filtration, properties of its Hilbert function and also techniques in combinatorics, we prove that almost all of these factor vector spaces have the same dimension.
Finally, we prove that  we can neglect the other factors vector spaces of the filtration where the dimension is not as expected.
 Another difficulty in the case of moving hypersurface targets is that they are in general position only for generic points. In order to overcome this difficulty, we use an element of the ideal of the inertia forms of the system of polynomials defining the moving hypersurfaces in order to control the locus where the divisors are not in general position. In fact, the ideal of  the inertia forms of such a system of polynomials  is not a principal ideal in general
 (unless $V$ is a complete intersection variety). But we will show that there  exists an element of this ideal with properties which are enough for our purpose.

Our  method was used again  by Ji-Yan-Yu \cite{JYY}, Yan-Yu \cite{YY}, Son-Tan-Thin \cite{STT} to prove  their   Second Main Theorems and  Schmidt's Subspace Theorems.

Let $f$ be a holomorphic mapping of $\C$ into $\C \P^M,$ with a reduced representation $f:=(f_0:\cdots: f_M).$ The characteristic function $T_f(r)$ of
$f$ is defined by
\begin{align*}
T_f(r):=\frac{1}{2\pi}\int\limits_0^{2\pi}\log\Vert f(re^{i\theta})\Vert d\theta,\quad r>1,
\end{align*}
where $\Vert f\Vert:= \max\{|f_0|,\dots,|f_M|\}.$

Let $\nu$ be a divisor on $\C.$ The counting function of $\nu$ is defined by
\begin{align*}
N_{\nu}(r):=\int\limits_1^{r}\log\frac{\sum_{|z|<t}\nu(z)}{t}dt,\quad r>1.
\end{align*}
For a non-zero meromorphic function $\varphi,$ denote by $\nu_\varphi$ the zero divisor
 of $\varphi,$ and set $N_\varphi(r):=N_{\nu_\varphi}(r).$
Let $Q$ be a homogeneous polynomial in the variables $x_0,\dots,x_M$ with coefficients which are meromorphic functions.
If $Q(f):=Q(f_0,\dots,f_M)\not\equiv 0$,
we define $N_f(r,Q):=N_{Q(f)}(r).$ Denote by $Q(z)$ the homogeneous  polynomial over $\C$ obtained by
evaluating the coefficients of $Q$ at a specific point $z \in \C$
in which all coefficient functions of $Q$ are holomorphic (in particular $Q(z)$ can be the zero polynomial).

We say that a meromorphic function $\varphi$ on $\C$ is ``small"
with respect to $f$ if $T_\varphi(r) = o(T_f(r))$ as $r \to \infty$
(outside a set of finite Lebesgue measure).

Denote by $\Cal K_f$ the set of all ``small" (with respect to $f$)
meromorphic functions on $\C$. Then $\Cal K_f$ is a field.

For a positive integer $d$, we set
\begin{align*}
\Cal T_d := \big\{ (i_0,\dots,i_M) \in \N_0^{M+1} :
i_0 + \dots + i_M = d \big\}.
\end{align*}
Let $\Cal Q=\{Q_1,\dots,Q_q\}$ be a set of $q\geq n+1$ homogeneous polynomials in $\Cal K_f[x_0,\dots,x_M],$
$\text{deg}\,Q_j = d_j \geq 1.$ We write
\begin{align*}
Q_j = \sum\limits_{I \in \Cal T_{d_j}} a_{jI}x^I \quad (j = 1,\dots,q)
\end{align*}
 where $x^I = x_0^{i_0} \cdots x_M^{i_M}$
for $x = (x_0,\dots,x_M)$ and $I = (i_0, \dots,i_M)$.
Denote by $\Cal K_{\Cal Q}$ the field over $\C$ of   all
meromorphic functions on $\C$ generated by
$\big\{ a_{jI} : I \in \Cal T_{d_j}, j \in \{1,\dots,q\}\big\}$.
It is clearly a subfield of $\Cal K_f$.

Let $V\subset\C \P^M$ be an arbitrary projective variety of dimension $n$, generated by the homogeneous polynomials in its ideal $\Cal I(V)$.
Assume that $f$ is   non-constant and Im$f\subset V.$ Denote by $\Cal I_{\Cal K_{\Cal Q}}(V)$ the ideal
in $\Cal K_{\Cal Q}[x_0,\dots,x_M]$ generated by  $\Cal I(V)$. Equivalently $\Cal I_{\Cal K_\Cal Q}(V)$ is
 the (infinite-dimensional) $\Cal K_\Cal Q$-sub-vector space of $\Cal K_{\Cal Q}[x_0,\dots,x_M]$ generated by  $\Cal I(V)$. We note that $Q(f)\equiv 0$ for every homogeneous polynomial
$Q\in\Cal I_{\Cal K_\Cal Q}(V).$
We say that $f$ is algebraically nondegenerate over
$\Cal K_{\Cal Q}$
if there is no  homogeneous polynomial
$Q \in \Cal K_{\Cal Q}[x_0,\dots,x_M]\setminus\Cal I_{\Cal K_\Cal Q}(V)$
such that
$Q(f) \equiv 0.$

The set $\Cal Q$ is said to be $V-$ admissible (or in (weakly) general position (with respect to $V$))
if there exists $z \in \C$
in which all coefficient functions of all $Q_j$, $j=1,...,q$ are holomorphic and such that for any
$1 \leq j_0 < \dots < j_n \leq q$ the system of equations
\begin{align} \label{zz}
\left\{ \begin{matrix}
Q_{j_i}(z)(x_0,\dots,x_M) = 0\cr
0 \leq i \leq n\end{matrix}\right.
\end{align}
has no solution  $(x_0, \dots , x_M)$ satisfying $(x_0:\cdots:x_M)\in V.$
As we will show in section 2, in this case this is true for all $z \in \C$ excluding a discrete subset of $\C$.

As usual, by the notation ``$\Vert P$" we mean that the assertion $P$ holds
for all $r \in [1, +\infty)$ excluding a Borel subset $E$ of $(1, +\infty)$
with $\displaystyle{\int\limits_E} dr < +  \infty$.

Our main result is stated as follows:

\noindent{\bf Main Theorem}. {\it Let $V\subset \C \P^M$ be an irreducible (possibly singular) variety of dimension $n,$ and let $f$ be a non-constant
holomorphic map of $\C$ into $V.$ Let
$\Cal Q=\{Q_1,\dots,Q_q\}$ be a $V-$ admissible set of homogeneous
polynomials in $\Cal K_f [x_0,\dots,x_M]$ with $\deg Q_j = d_j \geq 1$. Assume that $f$ is algebraically nondegenerate over $
\Cal K_{\Cal Q}$. Then for any $\varepsilon > 0,$
\begin{align*}
\Vert (q-n-1-\varepsilon) T_f(r) \leq \sum_{j=1}^q \frac{1}{d_j}
N_f(r,Q_j).
\end{align*}}
In the special case where the coefficients of the polynomials $Q_j$'s are constant and the variety $V$ is smooth,
the above theorem is the Second Main Theorem of Ru in \cite{R3}. According to Vojta (\cite{VS}, p. 183)
generalizing this theorem of Ru to singular varieties can be done already by his proof methods without essential changes of the proof
(see also Chen-Ru-Yan \cite{CRY1}, \cite{CRY2}), so
the essential generalization in our main result is the one to moving targets.

We define the defect of $f$ with respect to a homogenous polynomial $Q\in\Cal K_f[x_0,\dots,x_M]$
of degree $d$ with $Q(f)\not\equiv 0$
by
\begin{align*}
\delta_f(Q):=\lim_{r\to+\infty}\inf\Big(1-\frac{N_f(r,Q)}{d\cdot T_f(r)}\Big).
\end{align*}
As a corollary of the Main Theorem we get the following defect relation.
\begin{corollary} Under the assumptions  of the Main theorem,
we have
\begin{align*}
\sum_{j=1}^q \delta_f (Q_j) \leq n+1.
\end{align*}
\end{corollary}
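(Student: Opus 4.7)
My plan is to deduce the corollary directly from the Main Theorem by the standard manipulation that converts any ``Second Main Theorem''-style inequality into a defect relation. I expect no substantive obstacle, since the analytic content is already encoded in the Main Theorem. Before invoking it, I would check that each $N_f(r,Q_j)$ is well-defined, i.e. that $Q_j(f)\not\equiv 0$. If some $Q_j(f)\equiv 0$, algebraic nondegeneracy over $\mathcal{K}_{\mathcal{Q}}$ would force $Q_j\in\mathcal{I}_{\mathcal{K}_{\mathcal{Q}}}(V)$, so $Q_j$ would vanish on $V$; but $V$-admissibility requires the existence of $z\in\mathbb{C}$ at which no $n+1$ of the polynomials $Q_{j_i}(z)$ have a common zero on $V$, and identical vanishing of one $Q_j$ on $V$ makes every such subsystem containing the index $j$ vanish on all of $V$, a contradiction.

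Fix $\varepsilon>0$. The Main Theorem yields
\begin{equation*}
(q-n-1-\varepsilon)T_f(r)\leq\sum_{j=1}^q\frac{1}{d_j}N_f(r,Q_j)
\end{equation*}
for all $r$ outside a set $E_\varepsilon\subset[1,\infty)$ of finite Lebesgue measure. Since $f$ is non-constant we have $T_f(r)\to\infty$ as $r\to\infty$, so for sufficiently large $r\notin E_\varepsilon$ I can divide by $T_f(r)$ and rearrange to
\begin{equation*}
\sum_{j=1}^q\left(1-\frac{N_f(r,Q_j)}{d_j\,T_f(r)}\right)\leq n+1+\varepsilon.
\end{equation*}

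To finish, I would pass to the $\liminf$ as $r\to\infty$. Because $E_\varepsilon$ has finite measure, its complement in $[1,\infty)$ is unbounded and $\liminf_{r\to\infty}$ agrees with the $\liminf$ restricted to $r\notin E_\varepsilon$. Applying the elementary inequality $\sum_j\liminf\alpha_j(r)\leq\liminf\sum_j\alpha_j(r)$ to the finite sum on the left and recalling the definition of $\delta_f(Q_j)$ gives $\sum_{j=1}^q\delta_f(Q_j)\leq n+1+\varepsilon$; letting $\varepsilon\to 0$ completes the argument. The only point requiring care is the handling of the exceptional set implicit in the $\Vert$ notation, but this is purely bookkeeping rather than a genuine obstacle.
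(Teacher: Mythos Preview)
Your argument is correct and is precisely the standard derivation of a defect relation from a Second Main Theorem; the paper itself does not spell out a proof of the corollary, treating it as an immediate consequence of the Main Theorem, so your write-up simply fills in the routine details the authors left to the reader. The only minor imprecision is the claim that the $\liminf$ over all $r$ ``agrees'' with the $\liminf$ restricted to $r\notin E_\varepsilon$: in general the former is $\leq$ the latter, but that inequality is exactly the direction you need.
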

\noindent {\bf Acknowledgements:} This research was supported by the
Vietnam National Foundation for Science and Technology
Development (NAFOSTED) under grant number 101.02-2016.17. The authors were partially supported by the Vietnam Institute for Advanced Studies in Mathematics. The second named author was partially supported by the   Institut des Hautes \'Etudes Scientifiques (France),
 and by a travel grant from the Simons Foundation.
He also would like to thank William Cherry, Christophe  Soul\'e,  Ofer Gabber and  Laurent Buse for valuable discussions.  Finally both authors would like to thank Min Ru and Gordon Heier for pointing out to them an error in the proof of the first version of this paper (more precisely the second part of equation (2.12) in arXiv 1503.08801v1 (30 MAR 2015) does not hold in general). We have  corrected this error in arXiv 1503.08801v2 (13 FEB 2017).

\section{Lemmas}
Let $\Cal K$ be an arbitrary field over $\C$ generated by a set of meromorphic functions on $\C.$ Let $V$ be a sub-variety in $\C \P^M$ of dimension $n$
defined by the homogeneous ideal $\Cal I(V) \subset \C[x_0,\dots,x_M].$ Denote by $\Cal I_{\Cal K}(V)$ the ideal in $\Cal K[x_0,\dots,x_M]$
 generated by $\Cal I(V).$

For each positive integer $k$ and for any (finite or infinite dimensional) $\C$-vector sub-space $W$ in $\C[x_0,\dots,x_M]$ or
for any  $\cal K$-vector sub-space $W$ in  $\Cal K[x_0,\dots,x_M]$, we denote by $W_k$
the vector sub-space consisting of all
homogeneous polynomials in $W$ of degree $k$ (and of the zero polynomial; we remark that
$W_k$ is necessarily of finite dimension).

\noindent The Hilbert polynomial $H_V$  of $V$ is defined by
 \begin{align*}
H_V(N):=\dim_{\C}\frac{\C[x_0,\dots,x_M]_N}{\Cal I(V)_N},\quad N\in\N_0.
\end{align*}
By the usual theory  of Hilbert polynomials (see e.g. \cite{H}), for $N>>0,$ we have
\begin{align*}H_V(N)
=\deg V\cdot \frac{N^n}{n!} +O(N^{n-1}).
\end{align*}
\begin{definition}
Let $W$ be a $\cal K$-vector sub-space in $\Cal K[x_0,\dots,x_M].$  For each $z\in \C,$  we denote
$$W(z):=\{P(z): P\in W,\; \text{all coefficients of}\; P \;\text{are holomorphic at}\; z\}.$$
\end{definition}
 It is clear that $W(z)$ is a $\C$-vector sub-space of $\C[x_0,\dots,x_M].$
\begin{lemma} \label{L0}
Let $W$ be a $\cal K$-vector sub-space in $\Cal K[x_0,\dots,x_M]_N.$  Assume that $\{h_j\}_{j=1}^K$ is a basis of $W$. Then $\{h_j(a)\}_{j=1}^K$ is a basis of $W(a)$
  (and in particular $\dim_{\Cal K} W=\dim_{\C}W(a)$) for
all $a\in\C$ excluding a discrete subset.
\end{lemma}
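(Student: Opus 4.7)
Write each basis element as $h_j = \sum_{I \in \mathcal{T}_N} c_{jI}\, x^I$ with $c_{jI} \in \mathcal{K}$, and form the $K \times |\mathcal{T}_N|$ coefficient matrix $M := (c_{jI})_{j,I}$. The $\mathcal{K}$-linear independence of $\{h_j\}_{j=1}^K$ is equivalent to $M$ having rank $K$ over $\mathcal{K}$, so there exist indices $I_1, \dots, I_K \in \mathcal{T}_N$ such that the $K \times K$ minor
\begin{align*}
\Delta := \det\bigl(c_{jI_s}\bigr)_{1 \leq j,s \leq K}
\end{align*}
is a nonzero element of $\mathcal{K}$, hence a nonzero meromorphic function on $\mathbb{C}$. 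Let $D \subset \mathbb{C}$ be the union of the discrete sets of poles of all the $c_{jI}$'s and of the zeros and poles of $\Delta$. Then $D$ is discrete, and I will prove the conclusion for every $a \in \mathbb{C} \setminus D$.

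Fix such an $a$. First, the $\mathbb{C}$-linear independence of $\{h_j(a)\}_{j=1}^K$ is immediate: the corresponding $K \times K$ minor of $(c_{jI}(a))$ equals $\Delta(a) \neq 0$, so the matrix $(c_{jI}(a))$ still has rank $K$, which forces the polynomials $h_j(a)$ to be linearly independent over $\mathbb{C}$. It remains to identify $\operatorname{span}_{\mathbb{C}}\{h_j(a)\}$ with $W(a)$.

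One inclusion is trivial: for any $\beta_1, \dots, \beta_K \in \mathbb{C}$, the polynomial $P := \sum_j \beta_j h_j \in W$ has coefficients $\sum_j \beta_j c_{jI}$ which are holomorphic at $a$, so $\sum_j \beta_j h_j(a) = P(a) \in W(a)$. For the other inclusion, take $P \in W$ with all coefficients holomorphic at $a$, and write $P = \sum_{j=1}^K \alpha_j h_j$ uniquely with $\alpha_j \in \mathcal{K}$. The key step is to show that each $\alpha_j$ is itself holomorphic at $a$. Comparing the coefficients of $x^{I_1}, \dots, x^{I_K}$ on both sides yields a linear system whose coefficient matrix is exactly $(c_{jI_s})_{j,s}$ with determinant $\Delta$. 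Since the right-hand sides (the coefficients of $P$ at these monomials) are holomorphic at $a$ by assumption and $\Delta(a) \neq 0$, Cramer's rule expresses each $\alpha_j$ as a ratio of something holomorphic at $a$ by $\Delta$, hence $\alpha_j$ is holomorphic at $a$. Evaluating at $a$ then gives $P(a) = \sum_j \alpha_j(a) h_j(a) \in \operatorname{span}_{\mathbb{C}}\{h_j(a)\}$, as desired.

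The only subtle point — and what I would regard as the main obstacle — is this last holomorphy step: a priori the coefficients $\alpha_j$ of the $\mathcal{K}$-expansion might have poles at $a$ even when $P$ does not, and the non-vanishing of the single minor $\Delta$ at $a$ is exactly what is needed to rule this out via Cramer's rule. Everything else reduces to basic linear algebra over the function field $\mathcal{K}$ together with the observation that a nonzero meromorphic function has a discrete zero-and-pole set.
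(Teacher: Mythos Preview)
Your argument is correct and is essentially the same as the paper's: both pick a nonvanishing $K\times K$ minor $\Delta$ of the coefficient matrix, exclude the discrete set where $\Delta$ vanishes or some $c_{jI}$ has a pole, and then use Cramer's rule on that minor to show the $\mathcal K$-coordinates $\alpha_j$ of any $P\in W$ holomorphic at $a$ are themselves holomorphic at $a$, yielding both spanning and (via $\Delta(a)\neq 0$) linear independence of $\{h_j(a)\}$.
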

\begin{proof}
 Let $(c_{ij})$ be the matrix of coefficients of $\{h_j\}_{j=1}^K.$ Since
$\{h_j\}_{j=1}^K$ are linearly independent over $\cal K$, there exists a square submatrix $A$ of $(c_{ij})$ of order $K$ and such that $\det A\not\equiv 0.$
Let  $a$ be an arbitrary point in $\C$ such that $\det A(a)\ne 0$
and such that all coefficients of $\{h_j\}_{j=1}^K$ are holomorphic  at $a.$ For each $P\in W$ whose coefficients are all holomorphic at $a,$
we write $P=\sum_{j=1}^Kt_jh_j$ with $t_j\in\Cal K.$ In fact, there are coefficients $b_j$ $(j=1,\dots,K)$ of $P$ such that $(t_1,\dots,t_K)$ is the unique
solution in $\Cal K^K$ of the following system of linear equations:
\begin{align*}
A\cdot \left ( \begin{matrix}
t_1\cr
\cdot\cr
\cdot\cr
\cdot\cr
t_K\end{matrix}\right)=\left ( \begin{matrix}
b_1\cr
\cdot\cr
\cdot\cr
\cdot\cr
b_K\end{matrix}\right).
\end{align*}
 By our choice of $a,$ so in particular we have $\det A(a)\ne 0$, and since  $\{b_j\}_{j=1}^K$ are holomorphic at $a$,
we get that the $\{t_j\}_{j=1}^K$ are holomorphic at $a.$
Therefore, $P(a)=\sum_{j=1}^Kt_j(a)h_j(a),$ $t_j(a)\in\C.$ On the other hand,  still by our choice of $a$, we have $h_j(a)\in W(a)$ for all $j\in\{1,\dots,K\}.$ Hence,
 $\{h_j(a)\}_{j=1}^K$ is a generating system of $W(a).$
Since $\det A(a)\ne 0,$  the matrix $(c_{ij}(a))$ has maximum rank. Therefore, $\{h_j(a)\}_{j=1}^K$ are also linearly independent over $\C$.
\end{proof}

Throughout of this section, we consider a $V-$ admissible set of $(n+1)$  homogeneous polynomials $Q_0,\dots,Q_n$
in $\Cal K[x_0,\dots,x_M]$ of common degree $d.$
We write
\begin{align*}
Q_j = \sum\limits_{I \in \Cal T_{d}} a_{jI}x^I, \quad (j = 0,\dots,n),
\end{align*}
where $a_{jI}\in\Cal K$ and $\Cal T_d$ is again the set of all $I:=(i_0,\dots,i_M)\in \N_{0}^{M+1}$ with $i_0+\cdots+i_M=d$.

\noindent Let $ t=(\dots, t_{jI},\dots)$ be a family of variables. Set
\begin{align*}
\widetilde{Q_j} = \sum\limits_{I \in \Cal T_{d}} t_{jI}x^I\in\C[t,x], \quad (j = 0,\dots,n).
\end{align*}
We have
$$\widetilde{Q_j}(\dots,a_{jI}(z),\dots,x_0,\dots, x_M)=Q_j(z)(x_0,\dots,x_M).$$
Assume that the ideal $\Cal I(V)$ is generated by homogeneous polynomials $P_1,\dots,P_m.$ Since $\{Q_0,\dots, Q_n\}$ is a $V-$ admissible set,
there exists $z_0\in\C$ such that
the homogeneous polynomials $P_1,\dots,P_m,Q_0(z_0),\dots,Q_n(z_0)$ in $\C[x_0,\dots,x_M]$ have no common non-trivial solutions.
Denote by $_{\C[t]}(P_1,\dots, P_m, \widetilde{Q_0},\dots,\widetilde{Q_n})$
 the ideal in the ring of  polynomials in $x_0,\dots, x_M$ with coefficients in $\C[t]$ generated by
$P_1,\dots,P_m,\widetilde{Q_0},\dots,\widetilde{Q_n}.$
 A polynomial $\widetilde R$ in $\C[t]$  is called an
 {\it inertia form} of the polynomials $P_1,\dots, P_m, \widetilde{Q_0},\dots,\widetilde{Q_n}$ if it has the following property  (see e.g.  \cite{Z}):
\begin{align*}
x_i^s\cdot \widetilde {R}\in \;_{\C[t]}(P_1,\dots, P_m, \widetilde {Q_0},\dots,\widetilde{Q_n})
\end{align*}
for $i=0,\dots,M$ and for some non-negative integer $s.$

It is well known that for the $(m+n+1)$ homogeneous polynomials $P_i(x_0,\dots,x_M),$ $\widetilde{Q_j}(\dots,t_{jI},\dots,x_0,\dots, x_M),$ $ i\in\{1,\dots,m\},$
$j\in\{0,\dots,n\}$
there exist finitely many inertia forms
$\widetilde R_1,..., \widetilde R_s$ (which are homogeneous polynomials in the $t_{jI}$ separately for each
$j \quad (j = 0,\dots,n)$ ) such that the following holds :
For special values $t_{jI}^0$ of $t_{jI}$
the $(m+n+1)$ homogeneous polynomials $P_i(x_0,\dots,x_M),$ $\widetilde{Q_j}(\dots,t_{jI}^0,\dots,x_0,\dots, x_M),$ $ i\in\{1,\dots,m\},$
$j\in\{0,\dots,n\}$
have a
common non-trivial solution in $x_0,\dots,x_M$
if and only if
 $t_{jI}^0$  is a common zero of the inertia forms $\widetilde R_1,..., \widetilde R_s$
(see e.g. \cite{H}, page 35 or \cite{Z}, page 254).
Choose such a $\widetilde R$ for the special values $t_{jI}^0= a_{jI}(z_0)$, and put
$R(z):=\widetilde R(\dots,a_{kI}(z),\dots) \in \Cal K$.
Then by construction, $R(z_0)\ne 0,$ hence $R\in \Cal K\setminus \{0\}$, so in particular $R$
only vanishes on a discrete subset of $\C$, and, by the above property of the inertia form
$\widetilde R$, outside this discrete subset,
 $Q_0(z),\dots,Q_n(z)$
have no common solutions in $V$.
 Furthermore, by the definition of the inertia forms, there exists a non-negative integer $s$ such that
\begin{align}\label{2.1}
  x_i^s\cdot  R\in \;_{\Cal K}(P_1,\dots,P_m, Q_0,\dots,Q_n), \; \text{for} \; i=0,\dots,M,
\end{align}
 where $\;_{\Cal K}(P_1,\dots,P_m, Q_0,\dots,Q_n)$ is the ideal in $\Cal K[x_0,\dots,x_M]$
 generated by
$P_1,\dots,P_m, Q_0,\dots,Q_n.$

Let $f$ be a nonconstant meromorphic map of $\C$ into $\C \P^M$.
Denote by $\Cal C_f$ the set of
all    non-negative functions
$h : \C  \longrightarrow [0,+\infty] \subset \overline{\R}$,
which are of the form
\begin{equation}\label{expr}
\frac{|u_1|+ \dots + |u_k|}{|v_1|+ \dots + |v_\ell|}\: ,
\end{equation}
where $k,\ell \in \N$, $u_i, v_j \in \Cal K_f \setminus \{0\}.$ \\
By the First Main Theorem we have
\begin{align*}
\frac{1}{2\pi}\int\limits_0^{2\pi} \text{log}^+ |\phi (re^{i\theta})| d\theta = o(T_f(r)),\quad
\text{as}\ r \to \infty
\end{align*}
for $\phi \in \Cal K_f $.
Hence,  for any $h \in \Cal C_f$, we have
\begin{align*}
\frac{1}{2\pi}\int\limits_0^{2\pi} \text{log}^+ h(re^{i\theta}) d\theta= o(T_f(r)),\quad
\text{as}\ r \to \infty.
\end{align*}
It is easy to see that sums, products and quotients of functions
in $\Cal C_f$ are again in $\Cal C_f$.

By the result on the inertia forms mentioned above, similarly to Lemma 2.2  in \cite{DT}, we have
\begin{lemma}\label{inertia}
Let $\big\{Q_j\big\}_{j=0}^n$ be a $V-$ admissible set of homogeneous
polynomials  of degree $d$ in $\Cal K [x_0,\dots,x_M]$. If $\Cal K\subset\Cal K_f,$ then there exist
functions $h_1,h_2 \in \Cal C_f \setminus \{ 0 \}$ such that,
\begin{align*}
h_2 \cdot \Vert f \Vert^d\leq\max_{j \in \{0,\dots,n\}}
|Q_j(f_0,\dots,f_M)| \leq h_1 \cdot \Vert f \Vert^d.
\end{align*}
\end{lemma}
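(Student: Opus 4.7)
The plan is to prove the two bounds separately. For the upper bound, writing $Q_j = \sum_{I \in \Cal T_d} a_{jI} x^I$ and applying the triangle inequality directly yields
\begin{align*}
|Q_j(f_0,\dots,f_M)| \leq \Big(\sum_{I \in \Cal T_d} |a_{jI}|\Big)\|f\|^d,
\end{align*}
so $h_1 := \sum_{j,I} |a_{jI}|$ does the job: since each $a_{jI} \in \Cal K \subset \Cal K_f$, the function $h_1$ lies in $\Cal C_f$.

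The lower bound is the substantive part, and I would exploit the inertia form relation (\ref{2.1}) recorded just above the lemma. It provides a nonzero $R \in \Cal K$ and a nonnegative integer $s$ (which, after multiplying the identity by an extra power of $x_i$ if necessary, we may assume satisfies $s \geq d$) such that for each $i = 0,\dots,M$,
\begin{align*}
x_i^s \cdot R = \sum_{\ell=1}^m c_{i\ell}(x) P_\ell(x) + \sum_{j=0}^n b_{ij}(x) Q_j(x)
\end{align*}
for some $c_{i\ell}, b_{ij} \in \Cal K[x_0,\dots,x_M]$. Comparing the degree-$s$ homogeneous parts in $x$ on both sides, we may take $b_{ij}$ homogeneous in $x$ of degree $s - d$. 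Because Im$\,f \subset V$ forces $P_\ell(f) \equiv 0$ for every $\ell$, evaluation at $f$ collapses this identity to
\begin{align*}
f_i^s \cdot R = \sum_{j=0}^n b_{ij}(f_0,\dots,f_M) \cdot Q_j(f_0,\dots,f_M).
\end{align*}

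Bounding each $|b_{ij}(f_0,\dots,f_M)|$ by $H_{ij} \cdot \|f\|^{s-d}$, where $H_{ij} \in \Cal C_f$ is the sum of the absolute values of the coefficients of $b_{ij}$ (each of which lies in $\Cal K \subset \Cal K_f$), and then maximizing over $i$ gives
\begin{align*}
|R| \cdot \|f\|^s \leq H \cdot \|f\|^{s-d} \cdot \max_{0 \leq j \leq n} |Q_j(f_0,\dots,f_M)|
\end{align*}
for a suitable $H \in \Cal C_f$. Rearranging and setting $h_2 := |R|/H$ delivers the claim; that $h_2 \in \Cal C_f \setminus \{0\}$ follows from $R \in \Cal K \setminus \{0\} \subset \Cal K_f$ together with the closure of $\Cal C_f$ under quotients, already noted in the text.

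The only real subtlety I foresee is the homogeneity bookkeeping in the ideal-membership identity, namely that $b_{ij}$ can be chosen homogeneous of degree exactly $s-d$ (so the bound carries the needed $\|f\|^{s-d}$ factor rather than $\|f\|^s$); this is settled by restricting to the degree-$s$ homogeneous component of both sides, using that $R$ does not involve $x$. Everything else is routine manipulation inside $\Cal C_f$, which is closed under sums, products, and quotients.
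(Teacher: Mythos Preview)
Your proof is correct and follows essentially the same approach as the paper. The paper handles the upper bound as ``elementary'' and obtains the lower bound from the inertia-form relation (\ref{2.1}) together with $P_\ell(f)\equiv 0$, referring to Lemma~2.2 of \cite{DT} for the remaining estimates; you have simply written out those details explicitly, including the homogeneity bookkeeping and the verification that $h_2=|R|/H\in\Cal C_f\setminus\{0\}$.
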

\noindent In fact, the second inequality is elementary. In order to obtain the first inequality, we use equation
(\ref{2.1}) in the same way as the corresponding equation in Lemma 2.1 in \cite{DT}, and we observe
that we have $P_i(f_0,...,f_M) \equiv 0$ for $i=1,...,m$ since $f(\C) \subset V$, so the maximum
only needs to be taken over the $Q_j(f_0,...,f_M)$, $j=0,...,n$. The rest of the proof is identically
to the one of Lemma 2.2 in \cite{DT}.

 We use the lexicographic order in $\N_0^n$ and for $I=(i_1,\dots,i_n),$ set $\Vert I\Vert :=i_1+\cdots+i_n.$
\begin{definition} For each
 $I=(i_1,\cdots, i_n)\in \N_0^n$ and $N\in\N_0$ with $N\geq d\Vert I\Vert,$ denote by $\Cal L_N^I$
the set of all $\gamma\in\Cal K[x_0,\dots,x_M]_{N-d\Vert I\Vert}$ such that
\begin{align*}
Q_1^{i_1}\cdots Q_n^{i_n}\gamma-
\sum_{E=(e_1,\dots,e_n)>I}Q_1^{e_1}\cdots Q_n^{e_n}\gamma_E\in \mathcal I_{\Cal K}(V)_N.
\end{align*}
for Êsome $\gamma_E\in \Cal K [x_0,\dots,x_M]_{N-d\Vert E\Vert}$.
\end{definition}
Denote by $\Cal L^I$ the homogeneous ideal in $\Cal K [x_0,\dots,x_M]$ generated by $\cup_{N\geq d\Vert I\Vert}\Cal L_N^I.$
\begin{remark}\label{r1} i) $\Cal L_N^I$ is a $\Cal K$-vector sub-space of $\Cal K[x_0,\dots,x_M]_{N-d\Vert I\Vert},$ and
 $(\Cal I(V), Q_1,\dots,Q_n)_{N-d\Vert I\Vert}\subset \Cal L_N^I,$   where $(\Cal I(V), Q_1,\dots,Q_n)$ is
 the ideal in $\Cal K[x_0,\dots,x_M]$ generated by
 $\Cal I(V)\cup\{Q_1,\dots,Q_n\}.$

ii) For any $\gamma\in \Cal L_N^I$ and $P\in \Cal K[x_0,\dots,x_M]_k,$ we have $\gamma\cdot P\in\Cal L_{N+k}^I$

iii) $\Cal L^I\cap \Cal K[x_0,\dots,x_M]_{N-d\Vert I\Vert}=\Cal L_N^I.$

iv) $\frac{\Cal K[x_0, \dots, x_M]}{\Cal L^I}$
is a graded modul over the graded ring $\Cal K[x_0,\dots,x_M].$
\end{remark}
Set \begin{align*}
m_N^I:=\dim_{\Cal K}\frac{\Cal K[x_0,\dots,x_M]_{N-d\Vert I\Vert}}{\Cal L_N^I}.
\end{align*}
For each positive integer $N,$ denote by $\tau_N$ the set of all $I:=(i_0,\dots,i_n)\in \N_0^n$ with $N-d\Vert I\Vert\geq 0.$
Let $\gamma_{I1},\dots,\gamma_{Im_N^I}\in \Cal K[x_0,\dots,x_M]_{N-d\Vert I\Vert}$ such that they form a basis of the $\Cal K$-vector space $\frac{\Cal K[x_0,\dots,x_M]_{N-d\Vert I\Vert}}{{\Cal L_N^I}}.$
\begin{lemma}\label{L1}
$\{[Q_1^{i_1}\cdots Q_n^{i_n}\cdot\gamma_{I1}],\dots,[Q_1^{i_1}\cdots Q_n^{i_n}\cdot\gamma_{Im_N^I}],\;I=(i_1,\dots,i_n)\in\tau_N\}$
is a basis of the $\Cal K$-vector space
$\frac{\mathcal K[x_0,\dots,x_M]_N}{\mathcal I_{\Cal K}(V)_N}.$
\end{lemma}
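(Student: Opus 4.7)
The plan is to set up a finite decreasing filtration of the graded piece $\mathcal K[x_0,\dots,x_M]_N/\mathcal I_{\mathcal K}(V)_N$ indexed by $\tau_N$ in lexicographic order, and to identify each successive quotient with one of the factors $\mathcal K[x_0,\dots,x_M]_{N-d\Vert I\Vert}/\mathcal L_N^I$ via multiplication by $Q_1^{i_1}\cdots Q_n^{i_n}$. Since $\tau_N$ is finite, enumerate its elements in lex order as $I_1<I_2<\cdots<I_K$, with $I_1=(0,\dots,0)$ (so $Q^{I_1}=1$) and $I_K=(\lfloor N/d\rfloor,0,\dots,0)$ lex-maximal. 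Write $Q^{I_k}:=Q_1^{i_{k,1}}\cdots Q_n^{i_{k,n}}$, put $V_N:=\mathcal K[x_0,\dots,x_M]_N/\mathcal I_{\mathcal K}(V)_N$, and for $1\leq j\leq K$ define
\begin{align*}
\overline W_j:=\frac{\sum_{k\geq j}Q^{I_k}\cdot\mathcal K[x_0,\dots,x_M]_{N-d\Vert I_k\Vert}+\mathcal I_{\mathcal K}(V)_N}{\mathcal I_{\mathcal K}(V)_N}\subseteq V_N,
\end{align*}
together with $\overline W_{K+1}:=0$. This yields a chain $V_N=\overline W_1\supseteq\overline W_2\supseteq\cdots\supseteq\overline W_K\supseteq\overline W_{K+1}=0$, where $\overline W_1=V_N$ since $Q^{I_1}=1$.

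The key step is to prove that, for each $j$, multiplication by $Q^{I_j}$ descends to a $\mathcal K$-linear isomorphism
\begin{align*}
\Phi_j:\;\mathcal K[x_0,\dots,x_M]_{N-d\Vert I_j\Vert}\big/\mathcal L_N^{I_j}\;\xrightarrow{\ \sim\ }\;\overline W_j\big/\overline W_{j+1},\qquad [g]\longmapsto \big[Q^{I_j} g\big].
\end{align*}
Surjectivity is immediate from the definition of $\overline W_j$: every representative has the form $\sum_{k\geq j}Q^{I_k}g_k$ modulo $\mathcal I_{\mathcal K}(V)_N$, and the tail $\sum_{k>j}Q^{I_k}g_k$ already lies in $\overline W_{j+1}$. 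For the kernel, $\Phi_j(g)=0$ means there exist polynomials $g_k\in\mathcal K[x_0,\dots,x_M]_{N-d\Vert I_k\Vert}$ with $Q^{I_j}g-\sum_{k>j}Q^{I_k}g_k\in\mathcal I_{\mathcal K}(V)_N$. Any multi-index $E>I_j$ outside $\tau_N$ satisfies $N-d\Vert E\Vert<0$ and forces $\gamma_E=0$, so the $E>I_j$ that can contribute nontrivially to the sum defining $\mathcal L_N^{I_j}$ are exactly the $I_k$ with $k>j$; thus $\Phi_j(g)=0$ if and only if $g\in\mathcal L_N^{I_j}$. The isomorphism $\Phi_j$ then carries the basis $[\gamma_{I_j 1}],\dots,[\gamma_{I_j m_N^{I_j}}]$ to a basis $[Q^{I_j}\gamma_{I_j 1}],\dots,[Q^{I_j}\gamma_{I_j m_N^{I_j}}]$ of $\overline W_j/\overline W_{j+1}$.

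The lemma then follows from the standard filtered-vector-space principle: concatenating lifts of bases of the successive graded pieces $\overline W_j/\overline W_{j+1}$ produces a basis of the top term $\overline W_1=V_N$. Spanning is obtained by peeling off one layer at a time, and linear independence by picking, in any alleged dependence, the smallest $j$ with a nonzero coefficient and reducing modulo $\overline W_{j+1}$ to contradict the basis property of the $j$-th graded piece. The only mildly subtle point is the final step $j=K$: here the sum $\sum_{E>I_K}$ in the definition of $\mathcal L_N^{I_K}$ is vacuous on $\tau_N$, so $\mathcal L_N^{I_K}=\{g:Q^{I_K}g\in\mathcal I_{\mathcal K}(V)_N\}$, which is exactly the kernel of $g\mapsto[Q^{I_K}g]\in\overline W_K=\overline W_K/\overline W_{K+1}$. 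No $V$-admissibility hypothesis enters this argument; the statement is a purely formal consequence of the definitions of $\mathcal L_N^I$ and $m_N^I$.
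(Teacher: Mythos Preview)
Your proof is correct and follows essentially the same strategy as the paper's: both arguments exploit the lexicographic ordering on $\tau_N$ to peel off one $I$ at a time, with linear independence coming from looking at the smallest $I$ appearing in a relation and spanning coming from a descending induction starting at the lex-maximum. You package this as a filtration $V_N=\overline W_1\supseteq\cdots\supseteq\overline W_{K+1}=0$ with the successive quotients identified via $\Phi_j$, whereas the paper carries out the two halves (independence and spanning) as separate explicit inductions; the content is identical.
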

\begin{proof}
Firstly, we prove that:
\begin{align}\label{a0}
\{[Q_1^{i_1}\cdots Q_n^{i_n}\cdot\gamma_{I1}],\dots,[Q_1^{i_1}\cdots Q_n^{i_n}\cdot\gamma_{Im_N^I}],\;I=(i_1,\dots,i_n)\in\tau_N\}
\end{align}
 are linearly independent.

\noindent Indeed,
let $t_{I\ell}\in\Cal K,$ $(I=(i_1,\dots,i_n)\in\tau_N, \ell\in\{1,\dots,m_N^I\})$ such that
\begin{align*}
\sum_{I\in\tau_N}\big(t_{I1}[Q_1^{i_1}\cdots Q_n^{i_n}\cdot\gamma_{I1}]+\cdots+t_{Im_N^I}[Q_1^{i_1}\cdots Q_n^{i_n}\cdot\gamma_{Im_N^I}]\big)=0.
\end{align*}
Then
\begin{align}\label{a1}
\sum_{I\in\tau_N}Q_1^{i_1}\cdots Q_n^{i_n}\big(t_{I1}\gamma_{I1}+
\cdots+ t_{Im_N^I}\gamma_{Im_N^I}\big)\in\mathcal I_{\Cal K}(V)_N.
\end{align}
By the definition of $\Cal L_N^I,$ and by (\ref{a1}), we get
\begin{align*}
t_{I^*1}\gamma_{I^*1}+
\cdots+ t_{I^*m_N^{I^*}}\gamma_{I^*m_N^{I^*}}\in\mathcal L_N^{I^*},
\end{align*}
where $I^*$ is the smallest element of $\tau_N.$\\
On the other hand, $\{\gamma_{I^*1},\dots,\gamma_{I^*m_N^{I^*}}\}$ form a basis of $\frac{\Cal K[x_0,\dots,x_M]_{N-d\Vert I^*\Vert}}{\mathcal L_N^{I^*}}.$
 \\
Hence,
\begin{align}\label{a2}
t_{I^*1}=\cdots=t_{I^*m_N^{I^*}}=0.
\end{align}
Then, by (\ref{a1}), we have
\begin{align*}
\sum_{I\in\tau_N\setminus \{I^*\}}Q_1^{i_1}\cdots Q_n^{i_n}\big(t_{I1}\gamma_{I1}+
\cdots+ t_{Im_N^I}\gamma_{Im_N^I}\big)\in\mathcal I_{\Cal K}(V)_N.
\end{align*}
Then, similarly to (\ref{a2}), we have
\begin{align*}
t_{\tilde I1}=\cdots=t_{\tilde Im_N^{\tilde I}}=0,
\end{align*}
where $\tilde I$ is the smallest element of $\tau_N\setminus\{I^*\}.$

\noindent Continuing the above process, we get that $t_{I\ell}=0$ for all $I\in\tau_N$ and $\ell\in\{1,\dots, m_N^I\},$ and hence, we get (\ref{a0}).

Denote by $\Cal L$ the $\Cal K$-vector sub-space in $\Cal K[x_0,\dots,x_M]_N$ generated by
$$\{Q_1^{i_1}\cdots Q_n^{i_n}\cdot\gamma_{I1},\dots,Q_1^{i_1}\cdots Q_n^{i_n}\cdot\gamma_{Im_N^I},\;I=(i_1,\dots,i_n)\in\tau_N\}.$$

Now we prove that: For any $I=(i_1,\dots,i_n)\in\tau_N,$  we have
\begin{align}\label{a3}
Q_1^{i_1}\cdots Q_n^{i_n}\cdot\gamma_I\in \Cal L+\mathcal I_{\Cal K}(V)_N
\end{align}
for all $\gamma_I\in\Cal K[x_0,\dots,x_M]_{N-d\Vert I\Vert}.$

Set $I'=(i'_1,\dots,i'_n):=\max\{I: I\in\tau_N\}.$
Since $\gamma_{I'1},\dots,\gamma_{I'm_N^{I'}}$
form a basis of $\frac{\Cal K[x_0,\dots,x_M]_{N-d\Vert I'\Vert}}{{\Cal L_N^{I'}}},$  for any  $\gamma_{I'}\in\Cal K[x_0,\dots,x_M]_{N-d\Vert I'\Vert},$ we have
\begin{align}\label{a4}
\gamma_{I'}=\sum_{\ell=1}^{m_N^{I'}}t_{I'\ell}\cdot\gamma_{I'\ell}+h_{I'\ell},\; \text{where} \;h_{I'\ell}\in\Cal L_N^{I'},\;\text{and}\;t_{I'\ell}
\in\Cal K.
\end{align}
On the other hand, by the definition of $\Cal L_N^{I'}$ , we have
 $Q_1^{i'_1}\cdots Q_n^{i'_n}\cdot h_{I'\ell}\in \mathcal I_{\Cal K}(V)_N$
(note that $I'=\max\{I: I\in\tau_N\}).$
\noindent Hence,
\begin{align*}
Q_1^{i'_1}\cdots Q_n^{i'_n}\cdot\gamma_{I'}=\sum_{\ell=1}^{m_N^{I'}}t_{I'\ell}Q_1^{i'_1}\cdots Q_n^{i'_n}
\cdot\gamma_{I'\ell}+Q_1^{i'_1}\cdots Q_n^{i'_n}\cdot h_{I'\ell}
\in \Cal L+\mathcal I_{\Cal K}(V)_N.
\end{align*}
We get (\ref{a3}) for the case where $I=I'.$

Assume that (\ref{a3}) holds for all $I>I^*=(i^*_1,\dots,i^*_n).$ We prove that (\ref{a3}) holds also for $I=I^*.$

\noindent Indeed, similarly to (\ref{a4}), for any  $\gamma_{I^*}\in\Cal K[x_0,\dots,x_M]_{N-d\Vert I^*\Vert},$ we have
\begin{align*}
\gamma_{I^*}=\sum_{\ell=1}^{m_N^{I^*}}t_{I^*\ell}\cdot\gamma_{I^*\ell}+h_{I^*\ell},\; \text{where} \;h_{I^*\ell}\in\Cal L_N^{I^*},
\;\text{and}\;t_{I^*\ell}\in\Cal K.
\end{align*}
Then,
\begin{align}\label{a5}
Q_1^{i^*_1}\cdots Q_n^{i^*_n}\cdot\gamma_{I^*}=\sum_{\ell=1}^{m_N^{I^*}}t_{I^*\ell}Q_1^{i^*_1}\cdots Q_n^{i^*_n}
\cdot\gamma_{I^*\ell}+Q_1^{i^*_1}\cdots Q_n^{i^*_n}\cdot h_{I^*\ell}.
\end{align}
Since $h_{I^*\ell}\in\Cal L_N^{I^*},$ we have
\begin{align*}
Q_1^{i^*_1}\cdots Q_n^{i^*_n}\cdot h_{I^*\ell} - \sum_{E=(e_1,\dots,e_n)>I^*}Q_1^{e_1}\cdots Q_n^{e_n}\cdot g_{E}\in \mathcal I_{\Cal K}(V)_N,
\end{align*}
for some $g_E\in\Cal K[x_0,\dots,x_M]_{N-d\Vert E\Vert}.$

\noindent Therefore,  by the induction hypothesis,
\begin{align*}
Q_1^{i^*_1}\cdots Q_n^{i^*_n}\cdot h_{I^*\ell}\in \mathcal L+\mathcal I_{\Cal K}(V)_N.
\end{align*}
Then, by (\ref{a5}), we have
\begin{align*}
Q_1^{i^*_1}\cdots Q_n^{i^*_n}\cdot\gamma_{I^*}\in\mathcal L+\mathcal I_{\Cal K}(V)_N.
\end{align*}
This means that (\ref{a3}) holds for $I=I^*.$ Hence, by  (descending) induction we get (\ref{a3}).

For any $Q\in\Cal K[x_0,\dots,x_M]_N,$ we write $Q=Q_1^0\cdots Q_n^0\cdot Q.$
Then by (\ref{a3}), we have
$$Q\in\mathcal L+\mathcal I_{\Cal K}(V)_N.$$
Hence,
\begin{align*}
\{[Q_1^{i_1}\cdots Q_n^{i_n}\cdot\gamma_{I1}],\dots,[Q_1^{i_1}\cdots Q_n^{i_n}\cdot\gamma_{Im_N^I}],\;I=(i_1,\dots,i_n)\in\tau_N\}
\end{align*}
is a generating system of $\frac{\mathcal K[x_0,\dots,x_M]_N}{\mathcal I_{\Cal K}(V)_N}.$
Combining with (\ref{a0}), we get the conclusion of Lemma \ref{L1}.
\end{proof}

\begin{lemma} \label{inftylemma} $\#\{\Cal L^I: I\in \N_0^n\}<\infty.$
\end{lemma}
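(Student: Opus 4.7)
The plan is to establish monotonicity of $I \mapsto \Cal L^I$ with respect to the componentwise partial order on $\N_0^n$, and then combine this with Dickson's lemma and Hilbert's basis theorem to force the collection of distinct ideals to be finite.

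The key claim I would prove is the following: if $I, J \in \N_0^n$ satisfy $I \leq J$ componentwise, then $\Cal L^I \subseteq \Cal L^J$ as graded ideals of $\Cal K[x_0,\dots,x_M]$. To verify this, take $\gamma \in \Cal L_N^I$, so that
\[
Q_1^{i_1}\cdots Q_n^{i_n}\gamma - \sum_{E > I} Q_1^{e_1}\cdots Q_n^{e_n}\gamma_E \in \Cal I_{\Cal K}(V)_N
\]
for suitable $\gamma_E$. Multiplying through by $Q_1^{j_1 - i_1}\cdots Q_n^{j_n - i_n}$ (an honest polynomial since $J - I \geq 0$ componentwise) and using that $\Cal I_{\Cal K}(V)$ is an ideal yields
\[
Q_1^{j_1}\cdots Q_n^{j_n}\gamma - \sum_{E > I} Q_1^{e_1 + j_1 - i_1}\cdots Q_n^{e_n + j_n - i_n}\gamma_E \in \Cal I_{\Cal K}(V)_{N + d(\Vert J\Vert - \Vert I\Vert)}.
\]
The crucial combinatorial input is that the lexicographic order on $\N_0^n$ is invariant under translation by a nonnegative vector: if $E > I$ and $J - I \geq 0$ componentwise, then $E + (J - I) > I + (J - I) = J$. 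Re-indexing the sum over $E' := E + (J - I) > J$ therefore certifies $\gamma \in \Cal L_{N + d(\Vert J\Vert - \Vert I\Vert)}^J$. By Remark \ref{r1} iii), this shows the degree-$k$ homogeneous part of $\Cal L^I$ is contained in the degree-$k$ homogeneous part of $\Cal L^J$ for every $k$, whence $\Cal L^I \subseteq \Cal L^J$.

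To conclude, assume for contradiction that $\{\Cal L^I : I \in \N_0^n\}$ is infinite, and pick $I^{(1)}, I^{(2)},\ldots$ with pairwise distinct ideals $\Cal L^{I^{(k)}}$. Since $(\N_0^n,\leq)$ is a well-quasi-order (Dickson's lemma), there is an infinite subsequence satisfying $I^{(k_1)} \leq I^{(k_2)} \leq \cdots$ componentwise. The monotonicity established above then produces
\[
\Cal L^{I^{(k_1)}} \subsetneq \Cal L^{I^{(k_2)}} \subsetneq \cdots,
\]
an infinite strictly ascending chain of ideals in the polynomial ring $\Cal K[x_0,\dots,x_M]$, contradicting Hilbert's basis theorem.

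The only real obstacle is the monotonicity step, where one must track the degree shift $N \mapsto N + d(\Vert J\Vert - \Vert I\Vert)$ and crucially exploit that lex order on $\N_0^n$ is preserved under addition of nonnegative vectors; once this is in place, Dickson's lemma and Noetherianity finish the argument immediately.
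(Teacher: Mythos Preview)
Your argument is correct and matches the paper's proof essentially line for line: the paper also establishes the componentwise monotonicity $\Cal L^{I}\subseteq\Cal L^{J}$ via the same multiplication-and-translation trick, then extracts a componentwise nondecreasing subsequence (carrying out Dickson's lemma by hand through iterated subsequence selection) to build a strictly ascending chain and invoke Noetherianity. The only cosmetic differences are that you state the monotonicity claim up front and cite Dickson's lemma by name rather than reproving it.
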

\begin{proof}
Suppose that $\#\{\Cal L^I: I\in \N_0^n\}=\infty.$ Then there exists an infinite sequence $\{\Cal L^{I_k}\}_{k=1}^{\infty}$ consisting of pairwise different ideals. We write
$I_k=(i_{k1},\dots,i_{kn}).$ Since $i_{k\ell}\in\N_0$, there exists an infinite sequence of positve integers $p_1<p_2<p_3<\cdots$ such that
$i_{p_1\ell}\leq i_{p_2\ell}\leq i_{p_3\ell}\leq\cdots$, for all $\ell=1,\dots,n$\,: In fact, firstly we choose  a sub-sequence $i_{q_11}\leq i_{q_21}\leq i_{q_31}\leq \cdots$  of $\{i_{k1}\}_{k=1}^{\infty}.$ Next, we choose a sub-sequence of $i_{r_12}\leq i_{r_22}\leq i_{r_32}\leq \cdots$ of $\{i_{q_k2}\}_{k=1}^{\infty}.$  Continuing the above process until obtaining a sub-sequence $i_{p_1n}\leq i_{p_2n}\leq i_{p_3n}\leq\cdots.$

We now prove that:
\begin{align}\label{Noertherian}
\Cal L^{I_{p_1}}\subset \Cal L^{I_{p_2}}\subset\Cal L^{I_{p_3}}\subset\cdots.
\end{align}
Indeed, for any  $\gamma\in\Cal L_N^{I_{p_k}}$ (for any $N$ and $k$ satisfying $N-\Vert I_{p_k}\Vert \geq 0$), we have
\begin{align*}
Q_1^{i_{p_k1}}\cdots Q_n^{i_{p_kn}}\gamma-
\sum_{E=(e_1,\dots,e_n)>I_{p_k}}Q_1^{e_1}\cdots Q_n^{e_n}\gamma_E\in \mathcal I_{\Cal K}(V)_N,
\end{align*}
 for some $\gamma_E\in \Cal K [x_0,\dots,x_M]_{N-d\Vert E\Vert}.$

\noindent Then, since $i_{p_{k+1}1}-i_{p_k1},\dots, i_{p_{k+1}n}-i_{p_kn}$ are non-negative integers, we have
\begin{align*}
Q_1^{i_{p_{k+1}1}}\cdots Q_n^{i_{p_{k+1}n}}\gamma-
\sum_{E=(e_1,\dots,e_n)>I_{p_k}}Q_1^{e_1+(i_{p_{k+1}1}-i_{p_k1})}\cdots Q_n^{e_n+(i_{p_{k+1}n}-i_{p_kn})}\gamma_E\in \mathcal I_{\Cal K}(V)_N.
\end{align*}
On the other hand since $E=(e_1,\dots,e_n)>I_{p_k}$ we have $(e_1+i_{p_{k+1}1}-i_{p_k1},\dots,e_n+i_{p_{k+1}n}-i_{p_kn})>I_{p_{k+1}}$. Therefore, $\gamma\in\Cal L_{N-d\Vert I_{p_k}\Vert+d\Vert I_{p_{k+1}}\Vert}^{I_{p_{k+1}}}.$
Hence, $\Cal L_N^{I_{p_k}}\subset \Cal L_{N-d\Vert I_{p_k}\Vert+d\Vert I_{p_{k+1}}\Vert}^{I_{p_{k+1}}}$ for all $k, N.$ Therefore, $\Cal L^{I_{p_k}}\subset \Cal L^{I_{p_{k+1}}}$ for all $k$. We get (\ref{Noertherian}).

\noindent Since $\Cal K[x_0,\dots,x_M]$ is a noetherian ring, the chain of ideals in (\ref{Noertherian}) becomes finally stationary. This is a contradiction.
\end{proof}

\begin{lemma}\label{newlm} There are   integers $n_0$, $c$  and $c'$ such that the following assertions hold.

i) \;$\dim_{\Cal K} \frac{\Cal K[x_0,\dots,x_M]_{N-d\Vert I\Vert}}{(\Cal I(V), Q_1,\dots,Q_n)_{N-d\Vert I\Vert}}=c$
for all $I\in\N_0^n, N\in\N_0$ satisfying $N-d\Vert I\Vert\geq n_0.$

ii) \;For each $I\in \N_0^n$ there is an integer $m^I$ such that $m^I=m_N^I$ for all $N\in\N_0$ satisfying $N-d\Vert I\Vert\geq n_0.$

iii) \; $m_N^I\leq c',$ for all $I\in\N_0^n$ and $N\in\N_0$ satisfying $N-d\cdot \Vert I\Vert\geq 0.$
\end{lemma}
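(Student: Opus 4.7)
The plan is to establish the three assertions sequentially, the key tools being Lemma~\ref{L0} (to trade $\Cal K$-dimensions for $\C$-dimensions of generic specializations) and Lemma~\ref{inftylemma} (to obtain uniformity over $I$). For part~(i), I would set $k := N - d\Vert I\Vert$ and consider the finite-dimensional $\Cal K$-subspace $W_k := (\Cal I_{\Cal K}(V), Q_1,\dots,Q_n)_k$ of $\Cal K[x_0,\dots,x_M]_k$. Lemma~\ref{L0} produces a discrete set $D_k \subset \C$ outside which $\dim_{\Cal K} W_k = \dim_{\C} W_k(a)$. Since $\bigcup_k D_k$ is countable, one can choose a single $a^* \in \C$ outside it at which all coefficients of the $Q_j$ are holomorphic and at which $Q_0(a^*),\dots,Q_n(a^*)$ form an admissible system on $V$ (such $a^*$ exists, the admissibility locus being the complement of a discrete subset by the inertia-form construction earlier in Section~2). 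A direct lift/specialize check then gives $W_k(a^*) = (\Cal I(V), Q_1(a^*),\dots,Q_n(a^*))_k$ for every $k$, so the dimension in question coincides with the Hilbert function of $\C[x_0,\dots,x_M] / (\Cal I(V), Q_1(a^*),\dots,Q_n(a^*))$ at degree $k$. Admissibility forces $V \cap Z(Q_1(a^*),\dots,Q_n(a^*))$ to be at most $0$-dimensional (a positive-dimensional projective subvariety would meet $Z(Q_0(a^*))$, contradiction), so the Hilbert polynomial is a constant $c$, matching the Hilbert function past some threshold $n_0$; this $c$ is automatically independent of $a^*$ because the $\Cal K$-dimension on the left is intrinsic.

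For part~(ii), Remark~\ref{r1}(iii) identifies $m_N^I$ with the Hilbert function of the graded ring $\Cal K[x_0,\dots,x_M] / \Cal L^I$ at degree $N - d\Vert I\Vert$, which agrees with a polynomial past some threshold; Remark~\ref{r1}(i) gives $\Cal L^I \supset (\Cal I(V), Q_1,\dots,Q_n)$, so part~(i) bounds the polynomial above by $c$, and a non-negative integer-valued polynomial bounded on $\N$ must be constant, equal to some $m^I$. Lemma~\ref{inftylemma} provides only finitely many distinct ideals $\Cal L^I$, hence only finitely many such thresholds, and I can enlarge $n_0$ to handle every $I$ uniformly. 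For part~(iii), Remark~\ref{r1}(i) yields $(\Cal I(V), Q_1,\dots,Q_n)_{N - d\Vert I\Vert} \subset \Cal L_N^I$, so
\[
m_N^I \;\leq\; \dim_{\Cal K} \frac{\Cal K[x_0,\dots,x_M]_{N - d\Vert I\Vert}}{(\Cal I(V), Q_1,\dots,Q_n)_{N - d\Vert I\Vert}}.
\]
The right-hand side equals $c$ when $N - d\Vert I\Vert \geq n_0$ by~(i), and is otherwise bounded above by $\dim_{\Cal K}\Cal K[x_0,\dots,x_M]_{N - d\Vert I\Vert} \leq \binom{n_0 - 1 + M}{M}$; setting $c' := \max\bigl(c,\,\binom{n_0 - 1 + M}{M}\bigr)$ completes~(iii).

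The main obstacle is the coordination in~(i): one must choose a single $a^*$ that works simultaneously across every degree $k$, lies in the classical admissibility locus, and avoids the poles of every coefficient function of the $Q_j$. Each of these conditions excludes only a discrete subset of $\C$, so the total exceptional locus is countable and such $a^*$ exists; the remaining parts are then essentially formal, hinging on Hilbert-polynomial machinery together with Lemma~\ref{inftylemma}.
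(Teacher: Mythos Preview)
Your proposal is correct and follows essentially the same route as the paper: specialize at a generic $a\in\C$ via Lemma~\ref{L0} to identify the $\Cal K$-dimension with the Hilbert function of $\C[x_0,\dots,x_M]/(\Cal I(V),Q_1(a),\dots,Q_n(a))$ for part~(i), then combine the eventual polynomiality of the Hilbert function of $\Cal K[x_0,\dots,x_M]/\Cal L^I$ with the bound from~(i) and the finiteness in Lemma~\ref{inftylemma} for part~(ii), and bound $m_N^I$ above by this same Hilbert function for part~(iii). Your explicit selection of a single $a^*$ valid for all degrees simultaneously (by avoiding a countable union of discrete exceptional sets) is in fact a bit more careful than the paper's presentation, which fixes $a$ only after fixing $N$ and $I$; and your geometric reason for the zero-dimensionality of $V\cap Z(Q_1(a^*),\dots,Q_n(a^*))$ is a clean substitute for the paper's appeal to Hilbert--Serre, but the overall strategy is identical.
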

\begin{proof}
 For each $z$ in $\C$ such that
all coefficients of $Q_j\;(j=1,\dots,n\}$ are holomorphic at $z,$  we denote  by $(\Cal I(V), Q(z),\dots, Q(z))$
the ideal in $\C[x_0,\dots,x_M]$ generated by $\Cal I(V)\cup\{Q_1(z),\dots,Q_n(z)\}.$

We have
\begin{align}\label{a7}
(\Cal I(V),Q_1(z),\dots,Q_n(z))\subset (\Cal I(V),Q_1,\dots,Q_n)(z).
\end{align}
Indeed,  for any $P\in(\Cal I(V),Q_1(z),\dots,Q_n(z)),$ we write $P=G+Q_1(z)\cdot P_1+\cdots+Q_n(z)\cdot P_n,$ where $G\in\Cal I (V),$
and $P_i\in\C[x_0,\dots,x_M].$ Take $\widetilde P:=G+Q_1\cdot P_1+\cdots+Q_n\cdot P_n\in (\Cal I(V), Q_1,\dots, Q_n),$ then all
coefficients of $\widetilde P$ are holomorphic at $z.$ It is clear that $\widetilde P(z)=P.$
Hence, we get (\ref{a7}).

\noindent Let $N$ be an arbitrary positive integer and $I$ be an arbitrary element in $\tau_N.$ Let $\{h_k:=\sum_{j=1}^n Q_j\cdot R_{jk}+\sum_{j=1}^{m_k} \gamma_{jk}\cdot g_{jk}\}_{k=1}^K$
 be a basic system of $(\Cal I(V), Q_1,\dots,Q_n)_{N-d\cdot \Vert I\Vert},$ where $g_{jk}\in \Cal I(V),$ and
 $R_{jk}, \gamma_{jk},\in \Cal K[x_0,\dots,x_M]$ satisfying $\deg (Q_j\cdot R_{jk})=\deg (\gamma_{jk}\cdot g_{jk})=N-d\cdot \Vert I\Vert.$
By Lemma \ref{L0}, and since $\{Q_0,\dots,Q_n\}$ is a $V-$ admissible set, there exists $a\in\C$ such that:

i) $\{h_k(a)\}_{k=1}^K$ is a basic system of $(\Cal I(V),Q_1,\dots,Q_n)_{N-d\cdot \Vert I\Vert}(a),$

ii)  all coefficients of
$Q_j, R_{jk}, \gamma_{jk}, g_{jk}$ are holomorphic at $a,$ and

iii) the homogeneous polynomials $Q_0(a),\dots,Q_n(a)\in \C[x_0,\dots,x_M]$ have no common zero points in $V.$

 \noindent On the other hand, it is clear that $h_k(a)\in(\Cal I(V),Q_1(a),\dots,Q_n(a)),$ for all $k=1,\dots,K.$
Hence, by (\ref{a7}), and by i), we have
\begin{align*}
(\Cal I(V),Q_1(a),\dots,Q_n(a))_{N-d\cdot \Vert I\Vert}= (\Cal I(V),Q_1,\dots,Q_n)_{N-d\cdot \Vert I\Vert}(a).
\end{align*}
Then, we have
\begin{align*}
\dim_{\Cal K}(\Cal I(V),Q_1,\dots,Q_n)_{N-d\cdot \Vert I\Vert}=K&=\dim_{\C}(\Cal I(V),Q_1,\dots,Q_n)_{N-d\cdot \Vert I\Vert}(a)\\
&=\dim_{\C}(\Cal I(V),Q_1(a),\dots,Q_n(a))_{N-d\cdot \Vert I\Vert}.
\end{align*}
Therefore,
\begin{align}\label{aa80}
\dim_{\Cal K}\frac{\Cal K[x_0,\dots,x_M]_{N-d\Vert I\Vert}}{(\Cal I(V),Q_1,\dots,Q_n)_{N-d\Vert I\Vert}}
= \dim_{\C}\frac{\C[x_0,\dots,x_M]_{N-d\Vert I\Vert}}{(\Cal I(V),Q_1(a),\dots,Q_n(a))_{N-d\Vert I\Vert}}.
\end{align}
On the other hand, by the Hilbert-Serre Theorem (\cite{H}, Theorem 7.5), there exist positive integers $n_1, c$  such that
\begin{align*}
 \dim_{\C}\frac{\C[x_0,\dots,x_M]_{N-d\Vert I\Vert}}{(\Cal I(V),Q_1(a),\dots,Q_n(a))_{N-d\Vert I\Vert}}=c,
\end{align*}
for all $I\in\N_0^n$ and $ N\in\N_0$ satisfying $N-d\Vert I\Vert\geq n_1.$

\noindent Combining with (\ref{aa80}), we have
\begin{align}\label{aa8}
\dim_{\Cal K}\frac{\Cal K[x_0,\dots,x_M]_{N-d\Vert I\Vert}}{(\Cal I(V),Q_1,\dots,Q_n)_{N-d\Vert I\Vert}}=c,
\end{align}
for all $I\in\N_0^n$ and $N\in\N_0$  satisfying $N-d\Vert I\Vert\geq n_1.$

Let $h^I$ and $h$ be the Hilbert functions of $\frac{\Cal K[x_0,\dots,x_M]}{\Cal L^I}$ and $\frac{\Cal K[x_0,\dots,x_M]}{(\Cal I(V), Q_1,\dots,Q_n)},$ respectively. Since  $(\Cal I(V), Q_1,\dots,Q_n)\subset \Cal L^I,$ we have $h^I\leq h.$ On the other hand, by Matsumura \cite{Ma}, Theorem 14,  $h^I(k)$ is a polynomial in $k$ for all $k>>0$ and by (2.12), we have $h(k)=c$ for all $k\geq n_1.$ Hence, there are constants $m^I$, $n_2$ such that $h^I(k)=m^I$ for all $k\geq n_2$ and then $m_N^I=h^I(N-d\Vert I\Vert)=m^I$  for all $N\in \N_0$ satisfying $N-d\Vert I\Vert\geq n_2.$ By Lemma~\ref{inftylemma}, we may choose $n_2$ common for all $I.$
Taking $n_0:=\max\{n_1,n_2\},$ we get Lemma 2.8, i) and ii).

We have $m^I_N=h^I(N-d\Vert I\Vert)\leq h(N-d\Vert I\Vert)\leq\max\{c, h(k): k=0,\dots, n_0\}.$ Hence, taking $c':=\max\{c, h(k): k=0,\dots, n_0\},$ we get  Lemma~2.8 iii).

\end{proof}

Set
\begin{align*} m:=\min_{I \in \N_0^n}m^I.
\end{align*}
We fix  $I_0=(i_{01},\dots,i_{0n})\in \N_0^n,$ and  $N_0\in \N_0$ such that $N_0-d\Vert I_0\Vert\geq n_0$ and $m^{I_0}_{N_0}=m.$

For each  positive integer $N,$  divisible by $d,$ denote by $\tau_N^0$ the set of all $I=(i_1,\dots,i_n)\in\tau_N$ such that $N-d\Vert I\Vert \geq n_0$ and $i_k\geq \max\{i_{01},\dots,i_{0n}\}$,  for all $k\in\{1,\dots, n\}.$

\noindent We have
\begin{align}\label{ph1} \#\tau_N=\binom{\frac{N}{d}+n}{n}&=\frac{1}{d^n}\cdot\frac{N^n}{n!}+O(N^{n-1}), \notag\\
                                        \#\{I\in\tau_N: N-d\Vert I\Vert\leq n_0\}&=O(N^{n-1}),\notag\\
                                         \#\{I=(i_1,\dots,i_n)\in\tau_N:\; i_k&<\max_{1\leq\ell\leq n}i_{0\ell}, \;\text{for some}\;k\}=O(N^{n-1}),\;\text{and\: so}\notag\\
                                         \#\tau_N^0&=\frac{1}{d^n}\cdot\frac{N^n}{n!}+O(N^{n-1}).
\end{align}
\begin{lemma} \label{L2}
  $m_N^I=\deg V\cdot d^n$ for all $N>>0,$  divisible by $d,$ and  $I\in \tau^0_N.$
\end{lemma}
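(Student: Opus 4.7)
The plan is to exploit the asymptotics of the Hilbert polynomial together with the fact that Lemma \ref{L1} provides a basis of $\Cal K[x_0,\dots,x_M]_N / \Cal I_\Cal K(V)_N$ indexed by the multi-indices in $\tau_N$ and by elements of each quotient whose dimensions are the $m_N^I$. Thus, first I would observe that Lemma \ref{L1} immediately gives the identity
\begin{align*}
\sum_{I \in \tau_N} m_N^I \;=\; \dim_{\Cal K} \frac{\Cal K[x_0,\dots,x_M]_N}{\Cal I_{\Cal K}(V)_N}.
\end{align*}
By applying Lemma \ref{L0} to a basis of $\Cal I(V)_N$ (which is then also a basis of $\Cal I_{\Cal K}(V)_N$ over $\Cal K$), the right-hand side equals $H_V(N) = \deg V \cdot N^n/n! + O(N^{n-1})$.

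Next I would establish a monotonicity property for the ideals $\Cal L^I$: if $I = (i_1,\dots,i_n) \geq I_0 = (i_{01},\dots,i_{0n})$ componentwise, then $\Cal L^{I_0} \subset \Cal L^I$. This is exactly the argument already used inside the proof of Lemma \ref{inftylemma}: given $\gamma \in \Cal L^{I_0}_M$, multiplying the defining relation by $Q_1^{i_1 - i_{01}} \cdots Q_n^{i_n - i_{0n}}$ and using that $E > I_0$ implies $E + (I - I_0) > I$ shows that $\gamma$ lies in the appropriate graded piece of $\Cal L^I$. Consequently the Hilbert function of $\Cal K[x_0,\dots,x_M]/\Cal L^I$ is pointwise at most that of $\Cal K[x_0,\dots,x_M]/\Cal L^{I_0}$, so by Lemma \ref{newlm} (ii) we get $m^I \leq m^{I_0} = m$; combined with the definition of $m$ as the minimum, this yields $m^I = m$ for every such $I$.

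Now for $I \in \tau_N^0$ we have $i_k \geq \max\{i_{01},\dots,i_{0n}\} \geq i_{0k}$ for each $k$, so the previous step applies and furthermore $N - d\|I\| \geq n_0$, so by Lemma \ref{newlm} (ii) we get $m_N^I = m^I = m$. Splitting the sum, using Lemma \ref{newlm} (iii) to bound the terms with $I \in \tau_N \setminus \tau_N^0$ by $c'$, and using (\ref{ph1}) to bound the cardinality of that complement by $O(N^{n-1})$, I obtain
\begin{align*}
m \cdot \#\tau_N^0 \;+\; O(N^{n-1}) \;=\; \deg V \cdot \frac{N^n}{n!} + O(N^{n-1}).
\end{align*}
Substituting $\#\tau_N^0 = \frac{1}{d^n}\cdot\frac{N^n}{n!} + O(N^{n-1})$ from (\ref{ph1}), dividing by $N^n$, and letting $N \to \infty$ along multiples of $d$ gives $m = \deg V \cdot d^n$, which proves the lemma.

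The main obstacle I anticipate is verifying cleanly the componentwise monotonicity $\Cal L^{I_0} \subset \Cal L^I$ and the matching equality $m^I = m$ when $I \geq I_0$; everything else is a Hilbert-polynomial counting argument that is essentially forced once this monotonicity is in hand. It is important here that $I_0$ was selected so that $m^{I_0} = m$ is attained, so the reverse inequality $m^I \geq m$ is automatic from the definition of $m$ as a minimum.
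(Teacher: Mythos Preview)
Your proposal is correct and follows essentially the same route as the paper: both establish the inclusion $\Cal L^{I_0}\subset\Cal L^I$ for $I\geq I_0$ componentwise (the paper at the level of a single graded piece, you at the level of ideals, but by the identical multiplication-by-$Q_1^{i_1-i_{01}}\cdots Q_n^{i_n-i_{0n}}$ argument), deduce $m_N^I=m$ for all $I\in\tau_N^0$, and then extract $m=\deg V\cdot d^n$ by matching $\sum_{I\in\tau_N}m_N^I=H_V(N)$ against the cardinality asymptotics (\ref{ph1}) and the uniform bound in Lemma~\ref{newlm}(iii). One small imprecision: your appeal to Lemma~\ref{L0} for the equality $\dim_{\Cal K}\Cal I_{\Cal K}(V)_N=\dim_{\C}\Cal I(V)_N$ is misplaced --- the paper proves this directly (equation~(\ref{cl})) by the elementary observation that a $\C$-basis of $\Cal I(V)_N$ has a constant coefficient matrix of full rank and hence remains $\Cal K$-linearly independent, which does not require specialization at a point.
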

\begin{proof}
 For any $\gamma\in\Cal L^{I^0}_{N_0},$ we have
\begin{align*}
T:=Q_1^{i_{01}}\cdots Q_n^{i_{0n}}\gamma-\sum\limits_{E=(e_{1},\dots,e_{n})>I_0}Q_1^{e_{1}}\cdots Q_n^{e_{n}}\gamma_E\in\Cal I_{\Cal K}(V)_{N_0},
\end{align*}
for some $\gamma_E\in\Cal K[x_0,\dots,x_M]_{N-d\Vert E\Vert}.$

\noindent Then, for any $I=(i_1,\dots,i_n)\in\tau_N^0,$ we have
\begin{align}\label{h3}
Q_1^{i_{1}}\cdots Q_n^{i_{n}}\gamma&-\sum\limits_{E=(e_{1},\dots,e_{n})>I_0}Q_1^{e_{1}+i_1-i_{01}}\cdots Q_n^{e_{n}+i_n-i_{0n}}\gamma_E\notag\\
&=Q_1^{i_{1}-i_{01}}\cdots Q_n^{i_{n}-i_{0n}}\cdot T\in\Cal I_{\Cal K}(V)_{N_0}.
\end{align}
On the other hand since $I\in\tau_N^0$ and $E>I_0,$ we have $(e_{1}+i_1-i_{01},\dots,e_{n}+i_n-i_{0n})>I.$

\noindent Hence, by (\ref{h3}) we have
$$\gamma\in\Cal L^I_{N_0+d\Vert I\Vert-d\Vert I_0\Vert}.$$
This implies that
\begin{align*}
\Cal L^{I_0}_{N_0}\subset \Cal L^I_{N_0+d\Vert I\Vert-d\Vert I_0\Vert}.
\end{align*}
Then
\begin{align}
m=m^{I_0}_{N_0}&=\dim_{\Cal K}\frac{\Cal K[x_0,\dots,x_M]_{N_0-d\Vert I_0\Vert}}{\Cal L^{I_0}_{N_0}}\notag \\
&\geq\dim_{\Cal K}\frac{\Cal K[x_0,\dots,x_M]_{N_0-d\Vert I_0\Vert}}{\Cal L^I_{N_0+d\Vert I\Vert-d\Vert I_0\Vert}}\notag \label{tt}\\
&=m^I_{N_0+d\Vert I\Vert-d\Vert I_0\Vert}.
\end{align}
On the other hand since $\left(N_0+d\Vert I\Vert-d\Vert I_0\Vert\right)-d\Vert I\Vert =N_0-d\Vert I_0\Vert\geq n_0,$ and  $N-\Vert I\Vert\geq n_0$  (note that $I\in\tau_N^0$), by Lemma \ref{newlm}, we have
 $$m^I=m^I_{N_0+d\Vert I\Vert-d\Vert I_0\Vert}=m^I_{N}.$$
Hence, by (\ref{tt}), $m\geq m^I=m^I_{N}.$ Then, by the minimum property of $m,$ we get that
\begin{align}\label{np1}
m^I_N=m\; \text{for all}\; I\in\tau_N^0.
\end{align}

We now prove that:
\begin{align}\label{cl} \dim_{\Cal K}\Cal I_{\Cal K}(V)_N=\dim_{\C}\Cal I(V)_N.
\end{align}
Indeed,  let $\{P_1,\dots, P_s\}$ be a basis of the $\C-$ vector space $\Cal I(V)_N.$
It is clear that $\Cal I_{\Cal K}(V)_N$ is a vector space over $\Cal K$ generated by $\Cal I(V)_N,$ therefore $\{P_1,\dots, P_s\}$ is
also a generating system of $\Cal I_{\Cal K}(V)_N.$ Then,  for (\ref{cl}), it suffices to prove that
if $t_1,\dots, t_s\in\Cal K$ satisfy
\begin{align}\label{a6}
t_1\cdot P_1+\cdots+ t_s\cdot P_s\equiv 0,
\end{align}
then $t_1=\dots=t_s\equiv 0.$
We rewrite (\ref{a6}) in the following form
\begin{align*}
A\cdot \left ( \begin{matrix}
t_1\cr
\cdot\cr
\cdot\cr
\cdot\cr
t_s\end{matrix}\right)=\left ( \begin{matrix}
0\cr
\cdot\cr
\cdot\cr
\cdot\cr
0\end{matrix}\right),
\end{align*}
where $A\in$Mat$(\binom{M+N}{N}\times s,\Cal K).$

\noindent If the above system of linear equations has non-trivial solutions, then rank$_{\Cal K} A<s.$ Then
rank$_{\C} A(z)<s$ for all $z\in\C$ excluding a discrete set. Take $a\in\C$ such that
rank$_{\C} A(a)<s.$ Then the following system of linear equations
\begin{align*}
A(a)\cdot \left ( \begin{matrix}
t_1\cr
\cdot\cr
\cdot\cr
\cdot\cr
t_s\end{matrix}\right)=\left ( \begin{matrix}
0\cr
\cdot\cr
\cdot\cr
\cdot\cr
0\end{matrix}\right),
\end{align*}
has some non-trivial solution $(t_1,\dots,t_s)=(\alpha_1,\dots,\alpha_s)\in\C^s\setminus\{0\}.$
Then $\alpha_1\cdot P_1+\cdots+\alpha_s\cdot P_s\equiv 0,$ this is
a contradiction. Hence, we get (\ref{cl}).

By Lemma \ref{L1} and (\ref{cl}), we have
\begin{align}
\sum_{I\in\tau_N}m_N^I=\dim_{\Cal K}\frac{\Cal K[x_0,\dots,x_M]_N}{\Cal I_{\Cal K}(V)_N}
&=\dim_{\C}\frac{\C[x_0,\dots,x_M]_N}{\Cal I(V)_N}\notag\\
&=\deg V\cdot \frac{N^n}{n!} +O(N^{n-1}), \label{ttt}
\end{align}
for all $N$ large enough. \\
Combining with (\ref{np1}), we have
\begin{align}\label{aa9}
m\cdot\#\tau_N^0+\sum_{I\in\tau_N\setminus\tau_N^0}m_N^I=\deg V\cdot \frac{N^n}{n!} +O(N^{n-1}).
\end{align}
On the other hand by Lemma \ref{newlm}, $m_N^I\leq c',$ for all $I\in\tau_N\setminus\tau_N^0.$ Hence, by (\ref{ph1}), we have
\begin{align*}
m=\deg V\cdot d^n.
\end{align*}
Combining with (\ref{np1}), we have
\begin{align*}
m^I_N=\deg V\cdot d^n
\end{align*}
for all $I\in\tau_N^0.$
\end{proof}
\begin{lemma}\label{L3}
For each $s\in\{1,\dots,n\},$ and for $N>>0,$ divisible by $d,$ we have:
\begin{align*}
\sum_{I=(i_1,\dots,i_n)\in\tau_N}m_N^I\cdot i_s\geq \frac{\deg V}{d\cdot (n+1)!}N^{n+1}-O(N^n).
\end{align*}
\end{lemma}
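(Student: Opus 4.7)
The plan is to reduce the estimate to a purely combinatorial counting problem by restricting the sum to the subset $\tau_N^0 \subset \tau_N$ on which Lemma~\ref{L2} gives us exact control of $m_N^I$.

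\medskip

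\textbf{Step 1: Restrict to $\tau_N^0$.} Since $m_N^I \geq 0$ for every $I$ (it is the dimension of a quotient vector space) and $i_s \geq 0$, I can drop all the terms with $I \in \tau_N \setminus \tau_N^0$ and write
\begin{align*}
\sum_{I \in \tau_N} m_N^I \cdot i_s \;\geq\; \sum_{I \in \tau_N^0} m_N^I \cdot i_s \;=\; \deg V\cdot d^{n}\cdot \sum_{I \in \tau_N^0} i_s,
\end{align*}
where the last equality uses Lemma~\ref{L2}. It therefore suffices to show that
\begin{align*}
\sum_{I \in \tau_N^0} i_s \;\geq\; \frac{N^{n+1}}{d^{n+1}(n+1)!}-O(N^{n}).
\end{align*}

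\medskip

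\textbf{Step 2: Exploit symmetry.} Set $M_0:=\max\{i_{01},\dots,i_{0n}\}$. The defining conditions for $\tau_N^0$ (namely $N-d\|I\|\geq n_0$ and $i_k\geq M_0$ for all $k$) are symmetric under permutations of the coordinates $i_1,\dots,i_n$. Hence $\sum_{I\in\tau_N^0} i_s$ does not depend on $s$, and
\begin{align*}
\sum_{I\in\tau_N^0} i_s \;=\; \frac{1}{n}\sum_{I\in\tau_N^0}\|I\|.
\end{align*}

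\medskip

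\textbf{Step 3: Asymptotic counting.} Perform the shift $J=(i_1-M_0,\dots,i_n-M_0)\in\N_0^n$, so that $\|I\|=\|J\|+nM_0$ and the condition $N-d\|I\|\geq n_0$ becomes $\|J\|\leq N':=\lfloor(N-n_0)/d\rfloor -nM_0 = N/d+O(1)$. Then
\begin{align*}
\sum_{I\in\tau_N^0}\|I\| \;=\; \sum_{\substack{J\in\N_0^n\\ \|J\|\leq N'}}\bigl(\|J\|+nM_0\bigr).
\end{align*}
Since $\#\tau_N^0=O(N^n)$ by (\ref{ph1}), the contribution of the constant term $nM_0$ is $O(N^n)$. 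For the main term I use the standard identity $\#\{J\in\N_0^n:\|J\|=k\}=\binom{k+n-1}{n-1}$, giving
\begin{align*}
\sum_{\|J\|\leq N'}\|J\|\;=\;\sum_{k=0}^{N'} k\binom{k+n-1}{n-1}\;=\;\frac{n\,(N')^{n+1}}{(n+1)!}+O((N')^n)\;=\;\frac{n\, N^{n+1}}{d^{n+1}(n+1)!}+O(N^n).
\end{align*}
Dividing by $n$ and combining with Step 1 gives the lemma.

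\medskip

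\textbf{Main obstacle.} None of the steps is deep once Lemma~\ref{L2} is in hand; the only subtlety is to make sure the restriction to $\tau_N^0$ costs only lower-order terms, which is automatic because we are bounding from below by dropping non-negative contributions. The symmetry argument in Step 2 is what cleanly converts the estimate on $\sum i_s$ into an estimate on $\sum\|I\|$, avoiding any coordinate-by-coordinate bookkeeping.
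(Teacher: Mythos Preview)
Your proof is correct and follows essentially the same approach as the paper: restrict to $\tau_N^0$, apply Lemma~\ref{L2}, use the permutation symmetry of $\tau_N^0$ to reduce $\sum i_s$ to $\frac{1}{n}\sum\|I\|$, and then evaluate the combinatorial sum. The only cosmetic difference is that you compute $\sum_{I\in\tau_N^0}\|I\|$ directly via the coordinate shift $J=I-(M_0,\dots,M_0)$, whereas the paper writes it as $\sum_{I\in\tau_N}\|I\|-\sum_{I\in\tau_N\setminus\tau_N^0}\|I\|$ and bounds the second sum using $\|I\|\leq N/d$ and $\#(\tau_N\setminus\tau_N^0)=O(N^{n-1})$.
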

\begin{proof}
Firstly, we note that if $I=(i_1,\dots,i_n)\in\tau^0_N,$ then all symmetry $I'=(i_{\sigma(1)},\dots,i_{\sigma(n)})$ of $I$ also belongs to $\tau^0_N.$ On the other hand, by
 Lemma~\ref{L2}, we have $m_N^I=\deg V\cdot d^n,$ for all $I\in\tau_N^0.$ Therefore, by (\ref{ph1}) we have
\begin{align*}
\sum_{I=(i_1,\dots,i_n)\in\tau_N^0}&m_N^I\cdot i_1=\cdots=\sum_{I=(i_1,\dots,i_n)\in\tau_N^0}m_N^I\cdot i_n\notag\\
&=\deg V\cdot d^n\cdot \sum_{I\in\tau_N^0}\frac{\Vert I\Vert}{n}\notag\\
&=\deg V\cdot d^n\cdot \left(\sum_{I\in\tau_N}\frac{\Vert I\Vert}{n}-
 \sum_{I\in\tau_N\setminus\tau^0_N}\frac{\Vert I\Vert}{n}\right)\notag\\
&\geq\deg V\cdot d^n\left(\sum_{k=0}^{\frac{N}{d}}\frac{k}{n}
\cdot\binom{k+n-1}{n-1}-\left(\#\tau_N-\#\tau_N^0\right)
\cdot \frac{N}{nd}\right)\notag\\
&=\deg V\cdot d^n\left(\sum_{k=0}^{\frac{N}{d}}\frac{k}{n}
\cdot\binom{k+n-1}{n-1}-O(N^{n-1})
\cdot \frac{N}{nd}\right)\notag
\end{align*}
\begin{align*}
&=\deg V\cdot d^n\sum_{k=1}^{\frac{N}{d}}\binom{k+n-1}{n}-O(N^{n})\notag\\
&=\deg V \cdot d^n \binom{\frac{N}{d}+n}{n+1}-O(N^{n})\notag\\
&\geq \frac{\deg V}{d\cdot (n+1)!}N^{n+1}-O(N^n).
\end{align*}
Hence, for each $i\in\{1,\dots,n\}$
\begin{align*}
\sum_{I=(i_1,\dots,i_n)\in\tau_N}m_N^I\cdot i_s&\geq \sum_{I=(i_1,\dots,i_n)\in\tau_N^0}m_N^I\cdot i_s\notag
\\
&\geq \frac{\deg V}{d\cdot (n+1)!}N^{n+1}-O(N^n).
\end{align*}
\end{proof}
 We recall that by (\ref{ttt}), for $N>>0,$ we have
$$\dim_{\cal K}\frac{\Cal K[x_0,\dots,x_M]_N}{\Cal I_{\Cal K}(V)_N}
=H_V(N)
=\deg V\cdot \frac{N^n}{n!} +O(N^{n-1}).$$ Therefore, from Lemmas \ref{L1}, \ref{L3} we get immediately the following result.
\begin{lemma}\label{L4}
For all $N>>0$ divisible by $d,$ there are homogeneous polynomials $\phi_1,\dots,\phi_{H_V(N)}$ in $\Cal K[x_0,\dots,x_M]_N$ such that they form a basis
 of the $\mathcal K-$ vector space
$\frac{\mathcal K_[x_0,\dots,x_M]_N}{\mathcal I_{\Cal K}(V)_N},$  and
 \begin{align*}
\prod_{j=1}^{H_V(N)}\phi_j-\big(Q_{1}\cdots Q_{n}\big)^{\frac{\deg V\cdot N^{n+1}}{d\cdot (n+1)!}-u(N)}\cdot P\in\mathcal I_{\Cal K}(V)_{N \cdot H_V(N)}\,,
\end{align*}
where  $u(N)$ is a function in $N$ satisfying $u(N)\leq O(N^n),$ $P$ is a homogeneous polynomial of degree
$$N\cdot H_V(N)-\frac{n\cdot\deg V\cdot N^{n+1}}{(n+1)!}+ nd\cdot u(N)=\frac{\deg V\cdot N^{n+1}}{(n+1)!}+O(N^n).$$
\end{lemma}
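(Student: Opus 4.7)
The plan is to take the basis of $\frac{\Cal K[x_0,\dots,x_M]_N}{\Cal I_{\Cal K}(V)_N}$ supplied by Lemma~\ref{L1} as the $\phi_j$'s, form their product, and extract the maximal power of $Q_1\cdots Q_n$ that Lemma~\ref{L3} guarantees can be pulled out. The counts of $Q_s$-factors come for free from the structure of the basis; the arithmetic lower bound on those counts is exactly what Lemma~\ref{L3} gives.

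First I would fix $N$ large and divisible by $d$, and invoke Lemma~\ref{L1} to obtain a basis $\{Q_1^{i_1}\cdots Q_n^{i_n}\cdot\gamma_{I\ell}:I=(i_1,\dots,i_n)\in\tau_N,\ 1\leq\ell\leq m_N^I\}$ of the quotient. Equation~(\ref{ttt}) (which uses (\ref{cl}) and the Hilbert polynomial of $V$) confirms that the total cardinality of this basis, $\sum_{I\in\tau_N}m_N^I$, equals $H_V(N)$, so we may label its members $\phi_1,\dots,\phi_{H_V(N)}$. The product then decomposes as
\begin{align*}
\prod_{j=1}^{H_V(N)}\phi_j \;=\; Q_1^{A_1}\cdots Q_n^{A_n}\cdot G,
\end{align*}
where $A_s:=\sum_{I=(i_1,\dots,i_n)\in\tau_N}m_N^I\cdot i_s$ for $s=1,\dots,n$ and $G:=\prod_{I\in\tau_N}\prod_{\ell=1}^{m_N^I}\gamma_{I\ell}$.

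Next I set $A:=\min_{1\leq s\leq n}A_s$, so that $\prod_j\phi_j=(Q_1\cdots Q_n)^A\cdot P$ holds with $P:=Q_1^{A_1-A}\cdots Q_n^{A_n-A}\cdot G$ a legitimate homogeneous polynomial (all exponents non-negative by the choice of $A$). Lemma~\ref{L3} gives $A_s\geq\frac{\deg V}{d(n+1)!}N^{n+1}-O(N^n)$ for every $s$, hence the same bound for $A$. Defining $u(N):=\frac{\deg V\cdot N^{n+1}}{d(n+1)!}-A$ then yields $u(N)\leq O(N^n)$ and the desired form $A=\frac{\deg V\cdot N^{n+1}}{d(n+1)!}-u(N)$. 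For the degree of $P$,
\begin{align*}
\deg P=N\cdot H_V(N)-nd\cdot A=N\cdot H_V(N)-\frac{n\cdot\deg V\cdot N^{n+1}}{(n+1)!}+nd\cdot u(N),
\end{align*}
and combining $N\cdot H_V(N)=\frac{\deg V\cdot N^{n+1}}{n!}+O(N^n)$ with the complementary upper bound $A_s\leq\frac{\deg V\cdot N^{n+1}}{d(n+1)!}+O(N^n)$ (which forces $|u(N)|=O(N^n)$ and is obtained from the analogous splitting of $\tau_N$ into $\tau_N^0$ and $\tau_N\setminus\tau_N^0$, using Lemma~\ref{L2} on the former and Lemma~\ref{newlm}(iii) on the latter) gives $\deg P=\frac{\deg V\cdot N^{n+1}}{(n+1)!}+O(N^n)$. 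Since $\prod_j\phi_j=(Q_1\cdots Q_n)^A\cdot P$ holds exactly in $\Cal K[x_0,\dots,x_M]$, the difference $\prod_j\phi_j-(Q_1\cdots Q_n)^A\cdot P$ is zero, hence trivially in $\Cal I_{\Cal K}(V)_{N\cdot H_V(N)}$.

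The main obstacle is already behind us: this lemma is essentially a bookkeeping consequence of Lemmas~\ref{L1} and~\ref{L3}. The conceptual core---that $\sum_{I\in\tau_N}m_N^I\cdot i_s$ achieves the expected $\frac{\deg V}{d(n+1)!}N^{n+1}$ asymptotic in each coordinate $s$---was already handled in Lemma~\ref{L3}, via the symmetry of $\tau_N^0$ under coordinate permutations and the uniform value $m_N^I=\deg V\cdot d^n$ on $\tau_N^0$ supplied by Lemma~\ref{L2}. The only subtle point is that $u(N)$ must be controlled in both directions; the complementary upper bound follows from the same $\tau_N^0$ symmetry together with the $m_N^I\leq c'$ bound of Lemma~\ref{newlm}(iii), which controls the $O(N^{n-1})$-sized remainder set $\tau_N\setminus\tau_N^0$.
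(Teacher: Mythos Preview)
Your argument is correct and is exactly the approach the paper has in mind: the paper states that Lemma~\ref{L4} follows immediately from Lemmas~\ref{L1} and~\ref{L3}, and your write-up unpacks precisely that derivation, taking the $\phi_j$ to be the basis elements $Q_1^{i_1}\cdots Q_n^{i_n}\gamma_{I\ell}$ from Lemma~\ref{L1} and reading off the exponent from Lemma~\ref{L3}. Your additional remark that the two-sided bound $|u(N)|=O(N^n)$ (needed for the stated asymptotic of $\deg P$) comes from the same $\tau_N^0$ symmetry plus Lemma~\ref{newlm}(iii) is a useful clarification that the paper leaves implicit.
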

\begin{lemma}[see \cite{R1}]\label{L5} Let $f$ be a non-constant holomorphic map of $\C$ into $\C \P^M.$ Let $H_j=a_{j0}x_0+\cdots+a_{jM}x_M, j\in\{1,\dots,q\}$ be
 $q$ linear homogeneous polynomials in $\Cal K_f[x_0,\dots,x_M].$   Denote by $\Cal K_{\{H_j\}_{j=1}^q}$ the field over $\C$ of  all
meromorphic functions on $\C$ generated by $\{a_{ji}, i=0,\dots,M; j=1,\dots,q\}.$ Assume that $f$ is linearly non-degenerate over $\Cal K_{\{H_j\}_{j=1}^q}.$
Then for
each $\epsilon>0,$ we have
\begin{align}\label{inequi}
\Big\Vert \frac{1}{2\pi}\int_0^{2\pi}\max_{K}\log\prod_{k\in K}\Big(\frac{\Vert f\Vert\cdot\max\limits_{i=0,\dots,M}|a_{ki}|}{|H_k(f)|}(re^{i\theta})\Big)d\theta\leq (M+1+\epsilon)T_f(r),
\end{align}
where $\max_K$ is taken over all subsets $K\subset\{1,\dots,q\}$ such that the polynomials $ H_j,$ $ j\in K$ are linearly independent over $\Cal K_{\{H_j\}_{j=1}^q}.$
\end{lemma}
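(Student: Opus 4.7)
\noindent\textbf{Proof proposal for Lemma \ref{L5}.} The statement is Min Ru's Second Main Theorem for linearly non-degenerate holomorphic curves in $\C\P^M$ intersecting moving hyperplane targets, and the natural plan is to reproduce the Wronskian-based proof of \cite{R1}.

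The first step is to reduce the $\max_K$ inside the integrand to a bounded family of subsets of cardinality exactly $M+1$. Indeed, $|H_k(f)|\leq(M+1)\Vert f\Vert\max_i|a_{ki}|$, so each factor $\Vert f\Vert\max_i|a_{ki}|/|H_k(f)|$ is bounded below by $1/(M+1)$; enlarging a $\Cal K_{\{H_j\}_{j=1}^q}$-linearly-independent $K$ to a maximal one changes the product only by a bounded multiplicative constant, and linear independence then forces $|K|\leq M+1$. Since there are only $\binom{q}{M+1}$ such subsets, the integral of the $\max_K$ is bounded, up to $O(1)$, by a sum of integrals over fixed subsets $K=\{k_1,\dots,k_{M+1}\}$.

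For such a fixed $K$, the key identity is the Wronskian substitution
\begin{align*}
W_K := W\!\left(H_{k_1}(f),\dots,H_{k_{M+1}}(f)\right) = \det(a_{k_j i})\cdot W(f_0,\dots,f_M) + E,
\end{align*}
obtained by applying the Leibniz rule row by row, where $E$ collects terms in each of which at least one derivative falls on a coefficient $a_{k_j i}$ rather than on a component of $f$. Since $f$ is linearly non-degenerate over $\Cal K_{\{H_j\}_{j=1}^q}$, both $W(f_0,\dots,f_M)$ and $\det(a_{k_j i})$ are not identically zero, hence $W_K\not\equiv 0$. I would then factor
\begin{align*}
\prod_{k\in K}\frac{\Vert f\Vert\max_i|a_{ki}|}{|H_k(f)|} = \frac{\Vert f\Vert^{M+1}\prod_{k\in K}\max_i|a_{ki}|}{|W_K|}\cdot\frac{|W_K|}{\prod_{k\in K}|H_k(f)|},
\end{align*}
observe that the second factor is a polynomial in the logarithmic derivatives $H_{k_j}(f)^{(\ell)}/H_{k_j}(f)$ with coefficients in $\Cal K_{\{H_j\}_{j=1}^q}\subset\Cal K_f$, and apply Nevanlinna's logarithmic derivative lemma together with the $o(T_f(r))$-bound for $\log^+$ of small meromorphic functions (noted in the excerpt just before Lemma \ref{inertia}) to conclude that the integral of $\log^+$ of this second factor is $o(T_f(r))$.

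For the first factor, the Wronskian identity and another application of the logarithmic derivative lemma to absorb $E$ reduce it, up to small-function multiplicative errors, to $\Vert f\Vert^{M+1}/|W(f_0,\dots,f_M)|$. Jensen's formula then gives
\begin{align*}
\frac{1}{2\pi}\int_0^{2\pi}\log\frac{\Vert f\Vert^{M+1}}{|W(f_0,\dots,f_M)|}(re^{i\theta})\,d\theta = (M+1)T_f(r) - N_{W(f_0,\dots,f_M)}(r) + O(1) \leq (M+1)T_f(r) + O(1),
\end{align*}
since the Wronskian counting function is non-negative; the $\varepsilon T_f(r)$ slack in the statement absorbs this $O(1)$ together with the $o(T_f(r))$ error from the second factor. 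The main obstacle is the bookkeeping of the error term $E$ and of the transition $|W_K|/\prod_k|H_k(f)|\mapsto$ ``product of logarithmic derivatives'': each individual summand must be written as a small function in $\Cal K_{\{H_j\}_{j=1}^q}$ times a logarithmic derivative of a meromorphic function of characteristic $O(T_f)$, so that the logarithmic derivative lemma and the small-function $o(T_f)$ bound both apply. This accounting is routine in the fixed-target case but is the delicate new point in the moving-target setting, where the rows of the Wronskian matrix are no longer linear combinations of $f_i^{(\ell)}$ with constant coefficients.
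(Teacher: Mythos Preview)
The paper does not prove this lemma; it is quoted from Ru~\cite{R1} without argument, so there is no in-paper proof to compare against. Your outline is the classical Cartan--Wronskian proof, correct when the $a_{ji}$ are constants, and you rightly identify the passage from $W_K$ to $W(f_0,\dots,f_M)$ as the crux in the moving case. But this step is a genuine obstruction, not bookkeeping. You assert that the error $E$ (the terms where a derivative lands on some $a_{ji}$) can be absorbed because each summand is ``a small function times a logarithmic derivative of a function of characteristic $O(T_f)$''; that decomposition holds for the second factor $W_K/\prod_k H_k(f)$ but \emph{fails} for $E/W(f_0,\dots,f_M)$. Already for $M=1$, writing $g=f_1/f_0$ so that $W(f_0,f_1)=f_0^2g'$, the quotient $E/W(f_0,f_1)$ is a linear combination over the coefficient field of $1/g'$, $g/g'$ and $g^2/g'$; these are reciprocals of logarithmic-derivative quantities, and each has proximity function of exact order $T_f(r)$, not $o(T_f(r))$ (take $f_0=1$, $f_1=e^z$). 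Hence the first factor does not reduce to the $K$-independent quantity $\Vert f\Vert^{M+1}/|W(f_0,\dots,f_M)|$, and without a $K$-independent main term your earlier replacement of $\max_K$ by a sum over the $\binom{q}{M+1}$ subsets multiplies the leading $(M+1)T_f(r)$ by that binomial coefficient, which destroys the inequality.

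The moving-hyperplane second main theorem genuinely requires a different device; the arguments in~\cite{R1} (and the related~\cite{V},~\cite{RV}) arrange matters so that the $K$-dependence is carried entirely by functions built from the coefficient field from the outset, rather than by attempting to compare $W_K$ with $W(f_0,\dots,f_M)$ after the fact.
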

\begin{remark}\label{R2} Since the coefficients of the $H_j'$s are small functions (with respect to $f$), by  the First Main Theorem, and by (\ref{inequi}), for
each $\epsilon>0,$ we have
\begin{align*}
\Big\Vert \frac{1}{2\pi}\int_0^{2\pi}\max_{K}\log\prod_{k\in K}\Big(\frac{\Vert f\Vert}{|H_k(f)|}(re^{i\theta})\Big)d\theta\leq (M+1+\epsilon)T_f(r).
\end{align*}
\end{remark}

\section{Proof of the Main Theorem}
Replacing $Q_j$ by $Q^{\frac{d}{d_j}}$, where $d$ is the l.c.m of the $Q_j$'s, we may assume that the polynomials $Q_1,\dots,Q_q$ have the same degree $d.$
Let $N>>0$ be an integer divisible by $d.$
For each $J:=\{j_1,\dots,j_n\}\subset\{1,\dots,q\}, $ by Lemma \ref{L4} (for $\Cal K:=\Cal K_{\Cal Q}$), there are homogeneous polynomials $\phi_1^J,\dots,\phi_{H_V(N)}^J$
(depending on $J$) in $\Cal K_{\Cal Q}[x_0,\dots,x_M]$ and there are functions (common for all $J$)  $u(N), v(N)\leq O(N^n)$ such that they form  a basis
 of the $\mathcal K_{\Cal Q}-$ vector space
$\frac{\mathcal K_{\Cal Q}[x_0,\dots,x_M]}{\mathcal I_{\Cal K_{\Cal Q}}(V)_N},$  and
 \begin{align*}
\prod_{\ell=1}^{H_V(N)}\phi_\ell^J-\big(Q_{j_1}\cdots Q_{j_n}\big)^{\frac{\deg V\cdot N^{n+1}}{d\cdot (n+1)!}-u(N)}\cdot P_J\in\mathcal I_{\Cal K_{\Cal Q}}(V)_{N \cdot H_V(N)}\,,
\end{align*}
where $P_J$ is a homogeneous polynomial of degree
$\frac{\deg V\cdot N^{n+1}}{(n+1)!}+v(N).$

\noindent On the other hand, for any $Q\in\mathcal I_{\Cal K_{\Cal Q}}(V)_{N \cdot H_V(N)}$, we have $Q(f)\equiv 0.$

\noindent Therefore
 \begin{align*}
\prod_{\ell=1}^{H_V(N)}\phi_\ell^J(f)=\big(Q_{j_1}(f)\cdots Q_{j_n}(f)\big)^{\frac{\deg V\cdot N^{n+1}}{d\cdot (n+1)!}-u(N)}\cdot P_J(f).
\end{align*}
Since the coefficients of $P_J$ are small functions (with respect to $f$), it is easy to  see that there exist $h_J\in\Cal C_f$ such that
\begin{align*}
|P_J(f)|\leq \Vert f\Vert ^{\deg P_J}\cdot h_J=\Vert f\Vert ^{\frac{\deg V\cdot N^{n+1}}{(n+1)!}+v(N)}\cdot h_J.
\end{align*}
Hence,
 \begin{align*}
\log\Big(\prod_{\ell=1}^{H_V(N)}|\phi_\ell^J(f)|\Big)&
\leq\Big(\frac{\deg V\cdot N^{n+1}}{d\cdot (n+1)!}-u(N)\Big)\cdot\log\big|Q_{j_1}(f)\cdots Q_{j_n}(f)\big|+\log^+ h_J\\
&\quad\quad+
\big(\frac{\deg V\cdot N^{n+1}}{(n+1)!}+v(N)\big)\cdot\log\Vert f\Vert.
\end{align*}
This implies that there are functions $\omega_1(N), \omega_2(N)\leq O(\frac{1}{N})$ such that
\begin{align}\label{p1}
\log\big(|Q_{j_1}(f)|\cdots |Q_{j_n}(f)|\big)
\geq\Big(\frac{d\cdot (n+1)!}{\deg V\cdot N^{n+1}}&
-\frac{\omega_1(N)}{N^{n+1}}\Big)
\cdot \log\Big(\prod_{\ell=1}^{H_V(N)}|\phi_\ell^J(f)|\Big)
\notag\\-
\frac{1}{N^{n+1}}\log^+ \widetilde h_J
&-(d+\omega_2(N)\big)\cdot\log\Vert f\Vert,
\end{align}
for some $\widetilde h_J\in \Cal C_f.$

We fix homogeneous polynomials $\Phi_1,\dots,\Phi_{H_V(N)}\in \mathcal K_{\Cal Q}[x_0,\dots,x_M]_N$ such that they form a basis of the $\mathcal K_{\Cal Q}-$ vector space
$\frac{\mathcal K_{\Cal Q}[x_0,\dots,x_M]_N}{\mathcal I_{\Cal K_{\Cal Q}}(V)_N}.$ Then for each subset $J:=\{j_1,\dots,j_n\}\in\Cal \{1,\dots,q\},$
there exist homogeneous linear polynomials $L_1^J,\dots L_{H_V(N)}^J\in\mathcal K_{\Cal Q}[y_1,\dots,y_{H_V(N)}]$ such that they are linearly independent
over $\mathcal K_{\Cal Q}$ and

\begin{align}\label{p2}
\phi_\ell^J-L^J_\ell(\Phi_1,\dots,\Phi_{H_V(N)})\in\mathcal I_{\Cal K_{\Cal Q}}(V)_N,\;\text{for all}\;\ell\in\{1,\dots,H_V(N)\}.
\end{align}

It is easy to see that there exists a meromorphic function
$\varphi$ such that
 $N_{\varphi}(r)=o(T_f(r)),$  $N_{\frac{1}{\varphi}}(r)=o(T_f(r))$ and $\frac{\Phi_1(f)}{\varphi},\dots,\frac{\Phi_{H_V(N)}(f)}{\varphi}$ are
holomorphic and have no common zeros (note that all coefficients of $\Phi_\ell$ are in $\Cal K_{\Cal Q}\subset\Cal K_f$).

\noindent Let $F:\C \to\C \P^{H_V(N)-1}$ be the holomorphic map with the reduced representation
$F:=\big(\frac{\Phi_1(f)}{\varphi}:\cdots:\frac{\Phi_{H_V(N)}(f)}{\varphi}\big).$ Since $f$ is algebraically nondegenerate over
 $\Cal K_{\Cal Q},$ and since the polynomials $\Phi_1,\dots,\Phi_{H_V(N)}$ form a basis of
$\frac{\mathcal K_{\Cal Q}[x_0,\dots,x_M]_N}{\mathcal I_{\Cal K_{\Cal Q}}(V)_N}$, we get that $F$ is linearly non-degenerate over
$\Cal K_{\Cal Q}.$ As a corollary, $F$ is  linearly non-degenerate over the field over $\C$ generated by all coefficients of $L_\ell$'s.

It is easy to see that
\begin{align}\label{p3}
T_F(r)\leq N\cdot T_f(r)+o(T_f(r)).
\end{align}

In order to simplify the writing of the following series of inequalities,
put $A(N):=\Big(\frac{d\cdot (n+1)!}{\deg V\cdot N^{n+1}}
-\frac{\omega_1(N)}{N^{n+1}}\Big)$.
By (\ref{p2}), for all $\ell\in\{1,\dots,H_V(N)\}$ we have
\begin{align*}
\log|\phi_\ell^J(f)|=\log|L_\ell^J(F)|+\log|\varphi|.
\end{align*}
Hence, by (\ref{p1}),
 and by taking $\widetilde h\in\Cal C_f$ such that $\log^+ \widetilde h_J \leq \log^+ \widetilde h$ for all $J$,
we get
\begin{align}\label{p4}
\log\big(|Q_{j_1}(f)|\cdots |Q_{j_n}(f)|\big)&\geq
A(N) \cdot
\Big(H_V(N) \cdot \log | \varphi | +
\log\Big(\prod_{\ell=1}^{H_V(N)}|L_\ell^J(F)|\Big)\Big)\notag\\
&\quad\quad-\frac{1}{N^{n+1}}\log^+ \widetilde h_J-\big(d+\omega_2(N)\big)\log\Vert f\Vert\notag\\
&\geq
A(N) \cdot
\log\Big(\prod_{\ell=1}^{H_V(N)}|L_\ell^J(F)|\Big) + A(N) \cdot  H_V(N) \cdot \log | \varphi | \notag\\
&\quad\quad-\log^+ \widetilde h-\big(d+\omega_2(N)\big)\log\Vert f\Vert\,.
\end{align}
Then, by Lemma \ref{inertia},  and by increasing $\widetilde h\in\Cal C_f$ if necessary, we get
\begin{align}\label{3.5}
\text{log}\prod_{j=1}^q |Q_j(f)| &=
\max_{\{\beta_1,\dots,\beta_{q-n}\} \subset \{1,\dots,q\}}
\text{log}|Q_{\beta_1}(f) \cdots Q_{\beta_{q-n}}(f)| \notag \\
&\qquad + \min_{J = \{j_1,\dots,j_n\} \subset \{1,\dots,q\}}
\text{log}|Q_{j_1}(f) \cdots Q_{j_n}(f)| \notag \\
&\geq (q-n)d \cdot \text{log}\Vert f\Vert + \min_{J \subset\{1,\dots,q\},\#J=n} A(N)\cdot \log\Big(\prod_{\ell=1}^{H_V(N)}|L_\ell^J(F)|\Big) \notag \\
&\qquad - \big(d+\omega_2(N)\big)\log\Vert f\Vert + A(N) \cdot  H_V(N) \cdot  \log|\varphi|
 - \text{log}^+\widetilde h \notag \\
&= (q-n-1)d \cdot \text{log}\Vert f \Vert +\min_{J \subset
\{1,\dots,q\},\#J=n} A(N)\cdot \log\Big(\prod_{\ell=1}^{H_V(N)}|L_\ell^J(F)|\Big) \notag \\
&\qquad - \omega_2(N) \cdot \text{log}\Vert f \Vert
+ A(N) \cdot  H_V(N) \cdot  \log|\varphi| - \text{log}^+\widetilde h
\end{align}
Now for given $\epsilon >0$ we fix $N=N(\epsilon)$ big enough such that
\begin{equation}\label{3.6}
\omega_2(N)\leq \frac{\epsilon}{3}\: {\rm and}\: A(N) < 1\,.
\end{equation}
By using Remark \ref{R2} to the holomorphic map
$F:\C \to\C \P^{H_V(N)-1}$, the error constant $\frac{\epsilon}{2N}>0$ and
the system of linear polynomials $L_1^J,\dots L_{H_V(N)}^J\in\mathcal K_{\Cal Q}[y_1,\dots,y_{H_V(N)}]$, where $J$ runs over all subsets  $J:=\{j_1,\dots,j_n\}\in\Cal \{1,\dots,q\}$,
we get:
\begin{align} \label{3.7}
\Big\Vert\frac{1}{2\pi}\int_{0}^{2\pi}\max_{J \subset
\{1,\dots,q\},\# J=n}\log\Big(\prod_{\ell=1}^{H_V(N)}\frac{\Vert F\Vert }{|L_\ell^J(F)|}(re^{i\theta})\Big) d\theta \notag \\
\leq
 \frac{1}{2\pi}\int_0^{2\pi}\max_{K}\log\prod_{k\in K}\Big(\frac{\Vert F\Vert}{|L_\ell^J(F)|}(re^{i\theta})\Big)d\theta\leq (H_V(N)+\frac{\epsilon}{2N})T_F(r)\,,
\end{align}
where $\max_K$ is taken over all subsets of
the system of linear polynomials $L_1^J,\dots L_{H_V(N)}^J\in\mathcal K_{\Cal Q}[y_1,\dots,y_{H_V(N)}]$, where $J$ runs over all subsets  $J:=\{j_1,\dots,j_n\}\in\Cal \{1,\dots,q\}$,
such that these linear polynomials  are linearly independent over $\Cal K_{\Cal Q}$.

 So, by integrating (\ref{3.5}) and combining with (\ref{3.6}) and (\ref{3.7}) we have
 (using that $N_{\varphi}(r)=o(T_f(r)),$  $N_{\frac{1}{\varphi}}(r)=o(T_f(r))$,
 that $A(N) \cdot H_V(N) \leq O( \frac{1}{N})$
 and that  $\widetilde h\in\Cal C_f$)
\begin{align*}
\Big\Vert \sum_{j=1}^qN_f(r,Q_j)\geq &d(q-n-1)T_f(r)- \frac{\epsilon}{3}T_f(r)
+ A(N) \cdot H_V(N) \cdot \Big(N_{\varphi}(r) - N_{\frac{1}{\varphi}}(r)  \Big)
-\frac{\epsilon}{12}T_f(r)\notag\\
&+A(N)\cdot\frac{1}{2\pi}\int_{0}^{2\pi}\min_{J \subset
\{1,\dots,q\},\# J=n}\log\Big(\prod_{\ell=1}^{H_V(N)}{|L_\ell^J(F)|}(re^{i\theta})\Big) d\theta \notag\\
\geq&d(q-n-1)T_f(r)- \frac{\epsilon}{3}T_f(r)- \frac{\epsilon}{12}T_f(r)-\frac{\epsilon}{12}T_f(r)\notag\\
&+A(N)\cdot\frac{1}{2\pi}\int_{0}^{2\pi}\min_{J \subset
\{1,\dots,q\},\# J=n}\log\Big(\prod_{\ell=1}^{H_V(N)}{|L_\ell^J(F)|}(re^{i\theta})\Big) d\theta \notag
\end{align*}
\begin{align*}
=&d(q-n-1)T_f(r)- \frac{\epsilon}{2}T_f(r)\notag\\
&-A(N)\cdot\frac{1}{2\pi}\int_{0}^{2\pi}\max_{J \subset
\{1,\dots,q\},\# J=n}\log\Big(\prod_{\ell=1}^{H_V(N)}\frac{\Vert F\Vert }{|L_\ell^J(F)|}(re^{i\theta})\Big) d\theta \notag\\
&+A(N) \cdot H_V(N)\cdot T_F(r)\notag\\
\geq &d(q-n-1)T_f(r)
-A(N)\big(H_V(N)+\frac{\epsilon}{2N}\big)T_F(r)\notag\\
&+ A(N) \cdot H_V(N) \cdot T_F(r)-\frac{\epsilon}{2}T_f(r)\notag\\
\geq&d(q-n-1)T_f(r)-\frac{\epsilon}{2N} T_F(r)-\frac{\epsilon}{2}T_f(r)\notag\\
\geq& d(q-n-1-\epsilon)T_f(r).
\end{align*}
This completes the proof of the Main Theorem.
\hfill$\square$

 \noindent Gerd Dethloff \\
Universit\'{e} de Bretagne Occidentale (Brest)\\
Laboratoire de Math\'{e}matiques de Bretagne Atlantique - UMR 6205\\
6, avenue Le Gorgeu, CS 93837 \\
 29238 Brest Cedex 3, France \\
e-mail: gerd.dethloff@univ-brest.fr\\

\noindent Tran Van Tan\\
Department of Mathematics\\
  Hanoi National University of Education\\
 136-Xuan Thuy street, Cau Giay, Hanoi, Vietnam\\
e-mail: tantv@hnue.edu.vn

\end{document}